\newtheorem{lemma}{Lemma}
\newtheorem{theorem}{Theorem}
 \newtheorem{cor}{Corollary}[section]
\numberwithin{equation}{section} 
\begin{document}

\captionsetup[figure]{labelfont={bf},labelformat={default},labelsep=period,name={Fig.}}

\begin{frontmatter}

\title{Error estimates of finite difference methods for the Dirac equation in the massless and nonrelativistic regime}


\author[csrc]{Ying Ma}

\author[nus,lbl]{Jia Yin\corref{corauthor}}
\cortext[corauthor]{Corresponding author}
\ead{jiayin@lbl.gov}

\address[csrc]{Beijing Computational Science Research Center,
Beijing 100193, PR China}

\address[nus]{Department of Mathematics, National University of
Singapore, Singapore 119076, Singapore}

\address[lbl]{Computational Research Division, Lawrence Berkeley National Laboratory,
	Berkeley, CA 94720, USA}

\begin{abstract}
We present four frequently used finite difference methods and establish the error bounds for the discretization of the Dirac equation in the massless and nonrelativistic regime, involving a small dimensionless parameter $0< \varepsilon \ll 1$ inversely proportional to the speed of light. In the massless and nonrelativistic regime, the solution exhibits rapid motion in space and is highly oscillatory in time. Specifically, the wavelength of the propagating waves in time is at $O(\varepsilon)$, while in space it is at $O(1)$ with the wave speed at $O(\varepsilon^{-1}).$ We adopt one leap-frog, two semi-implicit, and one conservative Crank-Nicolson finite difference methods to numerically discretize the Dirac equation in one dimension and establish rigorously the error estimates which depend explicitly on the time step $\tau$, mesh size $h$, as well as the small parameter $\varepsilon$. The error bounds indicate that, to obtain the `correct' numerical solution in the massless and nonrelativistic regime, i.e. $0<\varepsilon\ll1$, all these finite difference methods share the same $\varepsilon$-scalability as time step $\tau=O(\varepsilon^{3/2})$ and mesh size $h=O(\varepsilon^{1/2})$. A large number of numerical results are reported to verify the error estimates.
\end{abstract}

\begin{keyword}
Dirac equation \sep massless and nonrelativistic regime \sep finite difference method \sep oscillatory in time \sep rapid motion in space
\end{keyword}

\end{frontmatter}

\linenumbers

\section{Introduction}
The Dirac equation, which plays a fundamental role in particle physics and mathematics, was proposed by the British physicist Paul Adrien Maurice Dirac in 1928 \cite{D28,D58,T92}. As a relativistic wave equation, the Dirac equation predicted the existence of antimatter which was observed in experiments in 1932 \cite{A33}. Moreover, it is also used to describe the fine details of the hydrogen spectrum, and it has been adopted to describe spin-1/2 massive particles, such as positrons, electrons, muons, neutrons, neutrinos, protons, etc. In addition, the Dirac equation throws light on many scientific phenomena which cannot be explained by classical physics, and provides theoretical support for interpreting some microscopic phenomena and simulating scientific experiments \cite{GKZSBR10}. Since the graphene was first produced in the lab in 2003 \cite{FLB14,NGMJKGDF05}, the Dirac equation has been extensively applied to study the structures and dynamical properties of graphene, graphite, topological insulators and other two dimensional materials. With the progress made in recent experiments, the study of the Dirac equation presents prospective and important scientific applications.
In this paper, we consider the Dirac equation in the massless and nonrelativistic regime on the torus $\mathbb{T}^d\,(d=1,2,3)$ as following
\begin{equation}\label{tomai}
i \partial_{t} \varPsi =\bigg(-\frac{i}{\varepsilon} \sum_{j=1}^{d} \alpha_{j} \partial_{j}+\frac{1}{\varepsilon} \beta\bigg) \varPsi+\bigg(V(t, \mathbf{x}) I_{4}-\sum_{j=1}^{d} A_{j}(t, \mathbf{x}) \alpha_{j}\bigg) \varPsi, \qquad \textbf{x}\in\mathbb{T}^d,
\end{equation}
where $\textbf{x}=(x_1, ..., x_d)^T\in \mathbb{T}^d $ is the spatial coordinate, $t$ is time, $\partial_{j}=\partial_{x_j}\,(j=1,...,d),$ $i=\sqrt{-1}$, $\varepsilon:=\frac{x_s}{t_s c}\in(0,1]$ is a dimensionless parameter which is inversely proportional to the speed of light $c$. In the expression of $\varepsilon$, $x_s$ and $t_s$ are the dimensionless length and time unit, respectively. $\varPsi:=\varPsi(t, \mathbf{x})=(\psi_1(t, \mathbf{x}),\psi_2(t, \mathbf{x}),\psi_3(t, \mathbf{x}),\psi_4(t, \mathbf{x}))^T$ $\in \mathbb{C}^4$ represents the complex-valued spinor wave function, $V:=V(t, \mathbf{x})$ is the electric potential, while $\mathbf{A}:=\mathbf{A}(t, \mathbf{x})=(A_1(t, \mathbf{x}), ..., A_d(t, \mathbf{x}))^T$ is the magnetic potential. The electromagnetic potentials are given real-valued functions. Besides, $I_n\,(n\in \mathbb{N})$ is the $n\times n$ identity matrix, and the Dirac matrices $\alpha_j\,(j=1,2,3)$, $\beta$ are all $4\times 4$ matrices which are defined as
\begin{equation}\label{mat}
\alpha_{j}=\left(\begin{array}{cc}
{\mathbf{0}} & {\sigma_{j}} \\
{\sigma_{j}} & {\mathbf{0}}
\end{array}\right), \quad \beta=\left(\begin{array}{cc}
{I_{2}} & {\mathbf{0}} \\
{\mathbf{0}} & {-I_{2}}
\end{array}\right),\qquad j=1,2,3,
\end{equation}
where the Pauli matrices $\sigma_j\,(j=1,2,3)$ are defined as follows
\begin{equation}\label{pau}
\sigma_{1}=\Bigg(\begin{array}{cc}
{0} & {1} \\
{1} & {0}
\end{array}\Bigg), \quad
\sigma_{2}=\Bigg(\begin{array}{cc}
{0} & {-i} \\
{i} & {0}
\end{array}\Bigg), \quad
\sigma_3=\Bigg(\begin{array}{cc}
{1} & {0} \\
{0} & {-1}
\end{array}\Bigg).
\end{equation}

As stated in \cite{BCJT17}, in the case of one dimension (1D) and two dimensions (2D) ($d=1,2$), the Dirac equation \eqref{tomai} can be simplified as
\begin{equation}\label{maiequ}
i \partial_{t} \varPhi=\bigg(-\frac{i}{\varepsilon} \sum_{j=1}^{d} \sigma_{j} \partial_{j}+\frac{1}{\varepsilon} \sigma_{3}\bigg) \varPhi+\bigg(V(t, \mathbf{x}) I_{2}-\sum_{j=1}^{d} A_{j}(t, \mathbf{x}) \sigma_{j}\bigg) \varPhi,\qquad \textbf{x}\in \mathbb{T}^d,
\end{equation}
where $\varPhi:=\varPhi(t, \textbf{x})=(\phi_1(t, \textbf{x}),\phi_2(t, \textbf{x}))^T\in\mathbb{C}^2$. To study its dynamics behavior, the initial condition is usually taken as
\begin{equation}\label{2dint}
\varPhi(t=0, \textbf{x})=\varPhi_0(\textbf{x}),\quad \textbf{x}\in \mathbb{T}^d.
\end{equation}
The Dirac equation \eqref{maiequ} maintains the total mass conservation, i.e.
\begin{equation}\label{con}
\begin{aligned}
\|\varPhi(t, \cdot)\|^{2} &:=\int_{\mathbb{T}^{d}}|\varPhi(t, \mathbf{x})|^{2} d \mathbf{x}=\int_{\mathbb{T}^{d}} \sum_{j=1}^2\left|\varPhi_{j}(t, \mathbf{x})\right|^{2} d \mathbf{x} \\& \equiv\|\varPhi(0, \cdot)\|^{2}=\left\|\varPhi_{0}\right\|^{2}, \quad t \geq 0.
\end{aligned}
\end{equation}

Introduce the total density
\begin{equation}\label{pas}
\rho(t,\textbf{x})=\sum_{l=1}^{2}\rho_l(t,\textbf{x})=\varPhi(t,\textbf{x})^*
\varPhi(t,\textbf{x}),\quad \textbf{x} \in\mathbb{T}^d,
\end{equation}
where $\varPhi^*=\overline{\varPhi}^T$ with $\overline{\varPhi}$ benig the complex conjugate of $\varPhi$, and the $l$-th component position density $\rho_l(t,\textbf{x})=|\phi_l(t,\textbf{x})|^2$ for $l=1,2$.
Besides, we define the current density $\textbf{J}(t,\textbf{x})=(J_1(t,\textbf{x}),\cdot\cdot\cdot,J_d(t,\textbf{x}))^T$ in the following
\begin{equation}\label{cde}
J_j(t,\textbf{x})=\frac{1}{\varepsilon}\varPhi(t,\textbf{x})^*
\sigma_j\varPhi(t,\textbf{x}),\quad j=1,\ldots,d.
\end{equation}
Then from the Dirac equation \eqref{maiequ}, we can derive the conservation law as below
\begin{equation}\label{col}
\partial_t\rho(t,\textbf{x})+\nabla\cdot\textbf{J}(t,\textbf{x})=0,\quad
\textbf{x}\in\mathbb{T}^d,\quad t\ge0.
\end{equation}
Here we notice when the electric potential $V$ is perturbed by a real constant $V^0$, i.e., $V \rightarrow V+V^0,$ the wave function can be expressed as $\varPhi(t,\textbf{x}) \rightarrow e^{-iV^0t}\varPhi(t,\textbf{x})$,  implying that the total density $\rho$ and the position density of each component $\rho_l$, $(l=1,2)$ are all unchanged. Furthermore, when $d=1$ and the magnetic potential $A_1$ is perturbed by a real constant $A_1^0$, i.e., $A_1 \longrightarrow A_1+A_1^0,$ then the solution can be expressed as $\varPhi(t,\textbf{x}) \rightarrow e^{iA_1^0t\sigma_1}\varPhi(t,\textbf{x}),$ which implies that the total density $\rho$ is unchanged. However, this property is not valid for $d=2.$ If the electromagnetic potentials are time-independent, i.e., $V(t,\textbf{x})=V(\textbf{x})$ and $A_j(t,\textbf{x})=A_j(\textbf{x}),\;j=1,2,$
then we can obtain that the energy functional remains conserved as
\begin{equation}\label{eco}
\begin{aligned}
E(\varPhi(t,\cdot)) :&=\int_{\mathbb{T}^{d}}\Big(-\frac{i}{\varepsilon} \sum_{j=1}^{d} \varPhi^{*} \sigma_{j} \partial_{j} \varPhi+\frac{1}{\varepsilon} \varPhi^{*} \sigma_{3} \varPhi+V(\mathbf{x})|\varPhi|^{2}-\sum_{j=1}^{d} A_{j}(\mathbf{x}) \varPhi^{*} \sigma_{j} \varPhi\Big) d \mathbf{x} \\
& \equiv E(\varPhi_0), \quad t \geq 0.
\end{aligned}
\end{equation}

When $\varepsilon=1$ in \eqref{maiequ} (or \eqref{tomai}), it collapses to the standard Dirac equation. A large quantity of analytical and numerical results have been devoted in this regime in literatures. For details, we refer to \cite{RK63,GMMP00,AT17,A92,BHM14,G15,HPAS14,AL17,BSG99,LLS17} and references therein. We remark here that there have been massive numerical results for the linear/nonlinear Dirac equations in different parameter regimes, such as nonrelativistic regime \cite{FW50,BCJY16,BCJT16,BCJT17,CW18,BCY20,BY19,CW19}, semiclassical regime \cite{WHJY12,BK99,MY19}, etc.

When $0<\varepsilon\ll1$ in \eqref{maiequ} (or \eqref{tomai}), in the
massless and nonrelativistic regime, the Dirac equation \eqref{maiequ} (or \eqref{tomai}) is a highly oscillatory dispersive partial differential equation \cite{BY19}. It propagates waves with wavelength $O(\varepsilon)$ in time and $O(1)$ in space, while the wave speed in space is at $O(1/\varepsilon)$. In other words, the waves are highly oscillatory in time and are rapidly propagating in space. To illustrate this, Fig. \ref{osctx} plots the wave function solution of \eqref{maiequ} with $d=1,\,V(t,x)=\frac{1}{2+{\textrm{sin}(\pi x)}},\,A_{1}(t, x)=\frac{1}{1+{\textrm{cos}^{2}(\pi x)}}$ and initial data $\varPhi_0(x)=\big(\textrm{sin}(\pi (x+1)),\textrm{cos}(\pi (x+1))\big)^T$ for various $\varepsilon.$

\begin{figure}[htp]
\centering
\subfigure{\includegraphics[width=0.495\textwidth]{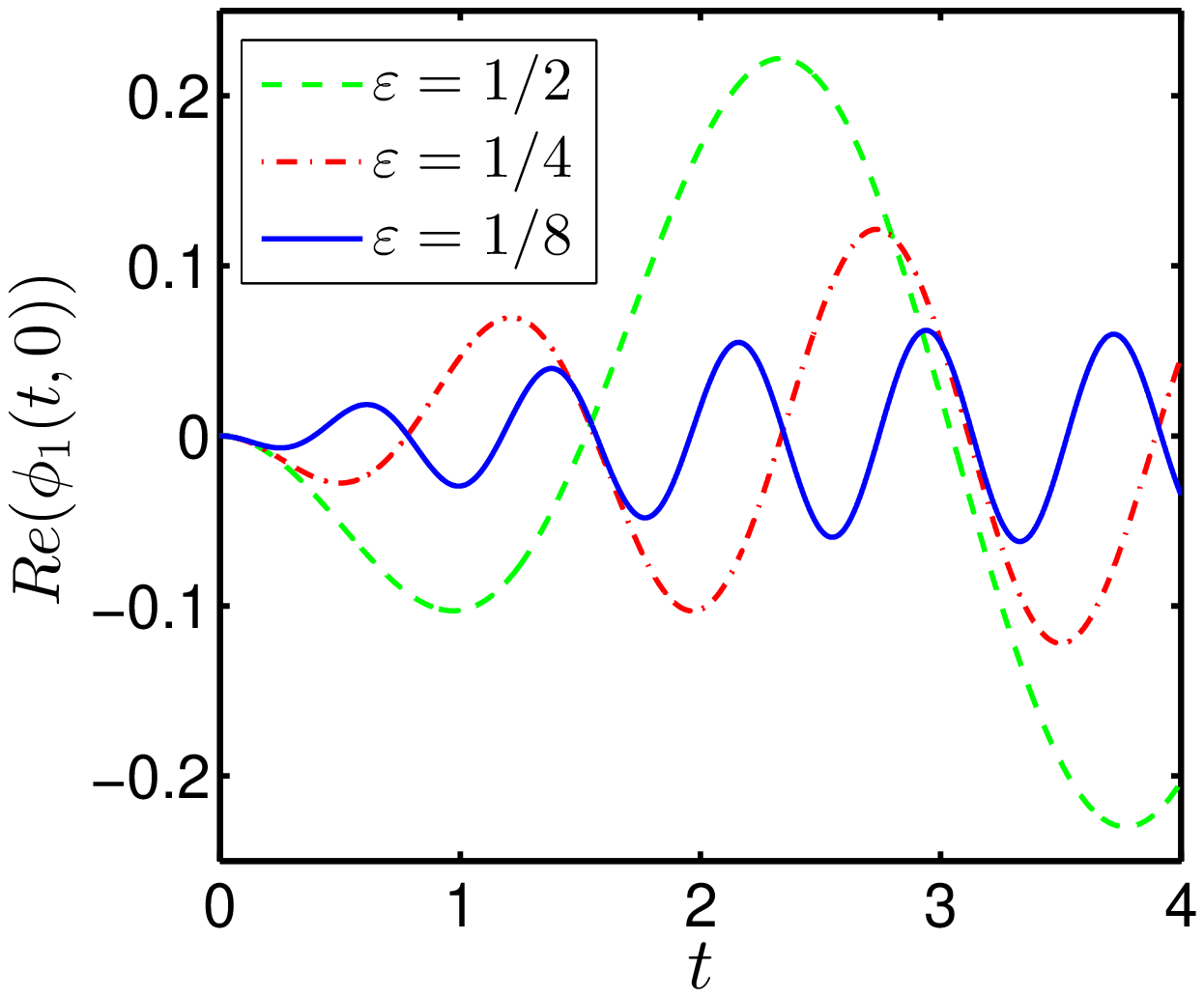}}
\subfigure{\includegraphics[width=0.495\textwidth]{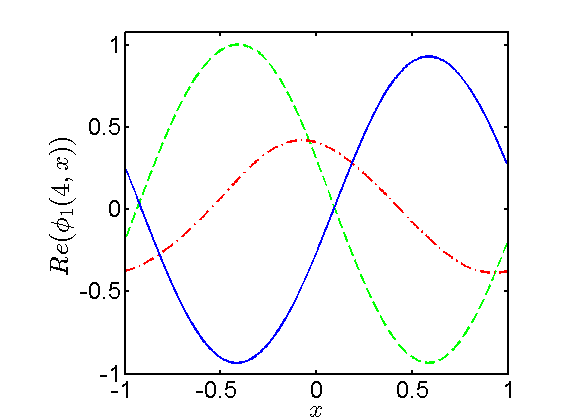}}
\captionsetup{width=\linewidth}
\caption{The real part of the wave function $\phi_1(t,x=0)$ and $\phi_1(t=4,x)$ for the Dirac equation \eqref{maiequ} in 1D with various $\varepsilon$. }
\label{osctx}
\end{figure}

For the Dirac equation in certain parameter regimes, the highly oscillatory nature of the solution causes serious numerical burdens, which makes the numerical approximation for the Dirac equation \eqref{maiequ} (or \eqref{tomai}) costly and extremely challenging. As a result, it is very important to design effective numerical methods. To our best knowledge, there are few numerical methods and research achievements for the Dirac equation \eqref{maiequ} (or \eqref{tomai}) in the massless and nonrelativistic regime. In this paper, the main purpose is to investigate the efficiency and to prove the error bounds of the finite difference methods for the Dirac equation in the massless and nonrelativistic regime. We analyze the stability and convergence of four fully explicit/semi-implicit/implicit finite difference methods. Specifically, we focus on how the error estimates are explicitly dependent on the time step $\tau$, the mesh size $h$, as well as the small parameter $\varepsilon$. Based on our error estimates, if we want to obtain the `correct' numerical solutions in the massless and nonrelativistic regime $(0<\varepsilon\le1)$, the meshing strategies (or $\varepsilon$-scalability) for the above four finite difference methods should all be $\tau=O(\varepsilon^{3/2})$ and $h=O(\varepsilon^{1/2}).$  The performance of various methods is reported by numerical results.

The rest of this paper is arranged as follows. In Section 2, we present the Crank-Nicolson finite difference (CNFD) method for the Dirac equation in the massless and nonrelativistic regime, show its mass and energy conservation, and establish its error bounds. Moreover, extensive numerical results are reported to confirm the error estimates and to demonstrate that our error bounds are sharp. In Section 3, we propose a semi-implicit finite difference (SIFD1) method for the problem, find its stability condition, prove its error bounds and report its numerical results. Similar results for another semi-implicit finite difference (SIFD2) method and the leap-frog finite difference (LFFD) method are presented in Section 4. Finally, some conclusions are drawn in Section 5.

In order to simplify the notations, we adopt the standard Sobolev spaces and norms, and the notation $p \lesssim q$ represents that there exists a generic positive constant $C>0$ independent of $\varepsilon,\,\tau,\,h,$ such that $|p|\le Cq.$ In the following discussion, we will take the 1D Dirac equation (\eqref{maiequ} with $d=1$) as an example to present the related stabilities and convergence analysis of the finite difference methods. The results can be generalized to the 2D case of \eqref{maiequ} and the cases $d=1, 2, 3$ of the four-component Dirac equation \eqref{tomai} directly, and the conclusions remain valid without modifications.

In the following, we consider the 1D Dirac equation \eqref{maiequ} on a bounded domain with periodic boundary conditions
\begin{eqnarray}\label{x1}
i{\partial _t}\varPhi =\left(-\frac{i}{\varepsilon} \sigma_{1} \partial_{x}+\frac{1}{\varepsilon} \sigma_{3}\right) \varPhi+\Big(V(t, x) I_{2}-A_{1}(t, x) \sigma_{1}\Big) \varPhi, \quad t>0,\,\, x \in\Omega,  \\ \label{x2}
\varPhi(t, a) = \varPhi(t, b),\,\; \partial_{x} \varPhi(t, a)=\partial_{x} \varPhi(t, b), \,t \geq 0 ;\quad  \varPhi(0, x)=\varPhi_{0}(x),\,\, x \in \bar{\Omega},
\end{eqnarray}
where $\Omega=(a,b),\,\varPhi:=\varPhi(t, x),\, \varPhi_{0}(a)=\varPhi_{0}(b) \text { and } \varPhi_{0}^{\prime}(a)=\varPhi_{0}^{\prime}(b).$
\section{A Crank-Nicolson finite difference (CNFD) method and its error estimate}
In this section, for the Dirac equation \eqref{x1}-\eqref{x2}, we adopt the Crank-Nicolson finite difference (CNFD) method.

\subsection{The CNFD method }
We choose the time step $\tau:=\Delta t>0$ and the mesh size $h:=\Delta x=\frac{b-a}{M}$, where $M$ is a positive integer, and define the uniform time steps and grid points as following:
\begin{equation*}
t_n:=n\tau,\quad n=0,1,2,\ldots \quad x_j:=a+jh,\quad j=0,1,\ldots,M.
\end{equation*}
Denote $\varPhi_j^n$ as the numerical approximation of $\varPhi(t_n,x_j),$ $V_j^n=V(t_n,x_j),\,V_j^{n+\frac{1}{2}}=V(t_n+\tau/2,x_j),\,
A_{1,j}^{n}=A_1(t_{n},x_j)$ and $A_{1,j}^{n+\frac{1}{2}}=A_1(t_n+\tau/2,x_j)$ for $n\ge 0$ and $0\le j\le M.$
Denote $\varPhi^n=(\varPhi_0^n,\varPhi_1^n,\cdot\cdot\cdot,\varPhi_M^n)^T\in X_M$, where $X_M=\{U=(U_0,U_1,\ldots,U_M)^T|U_j\in\mathbb{C}^2,
	\,j=0,1,\ldots,M,\,U_0=U_M\}$, as the solution vector at $t=t_n$. Let us introduce the discretization operators of the finite difference method for $n\ge 0$ and $j=0,1,\cdot\cdot\cdot,M$ as follows:
\begin{equation*}\label{yx}
\delta_{t}^{+} \varPhi_{j}^{n}=\frac{\varPhi_{j}^{n+1}-\varPhi_{j}^{n}}{\tau}, \quad \delta_{t} \varPhi_{j}^{n}=\frac{\varPhi_{j}^{n+1}-\varPhi_{j}^{n-1}}{2 \tau}, \quad \delta_{x} \varPhi_{j}^{n}=\frac{\varPhi_{j+1}^{n}-\varPhi_{j-1}^{n}}{2 h},
\end{equation*}
and
\begin{equation*}\label{z}
\varPhi_{j}^{n+\frac{1}{2}}=\frac{\varPhi_{j}^{n+1}+\varPhi_{j}^{n}}{2}.
\end{equation*}

In order to discretize the Dirac equation \eqref{x1} for $n\ge 0,\,j=0,1,\cdot\cdot\cdot,M-1,$ we consider the following frequently used CNFD scheme
\begin{equation}\label{fuz4}
i \delta_{t}^{+} \varPhi_{j}^{n}=\frac{1}{\varepsilon}\Big(-i \sigma_{1} \delta_{x} +\sigma_{3} \Big) \varPhi_{j}^{n+\frac{1}{2}}+\left(V_{j}^{n+\frac{1}{2}} I_{2}-A_{1, j}^{n+\frac{1}{2}} \sigma_{1}\right) \varPhi_{j}^{n+\frac{1}{2}}.
\end{equation}
The boundary and initial conditions of \eqref{x2} are discretized as below:
\begin{equation}\label{zbcn}
\varPhi_{M}^{n+1}=\varPhi_{0}^{n+1},~\, \varPhi_{-1}^{n+1}=\varPhi_{M-1}^{n+1},~\, n \geq 0 ; \quad \varPhi_{j}^{0}=\varPhi_{0}\left(x_{j}\right),~\, j=0,1, \ldots, M.
\end{equation}
Here we notice that the CNFD method is time symmetric, which means that it is unchanged under $n+1\leftrightarrow n$ and $\tau\leftrightarrow-\tau$. The CNFD method is unconditionally stable, in other words, it is stable for any $\tau$, $h>0$ and $0<\varepsilon\le1.$ The memory cost of the CNFD method \eqref{fuz4} is $O(M)$. It is implicit and at each time step for $n\ge0$, its corresponding linear system is coupled in order that it needs to be solved by means of either an iterative solver or a direct solver. Hence, the computational cost per step mainly
depends on its linear system solver, which is generally much larger than $O(M)$, especially in 2D and 3D.

\subsection{Mass and energy conservation}
If $U\in X_M$, then we take $U_{-1}=U_{M-1}$ and $U_{M+1}=U_1$ if they are involved. In $X_M$, define the standard $l^2$ and $l^\infty$ norms as below
\begin{equation}\label{yy}
\|U\|_{l^{2}}^{2}=h \sum_{j=0}^{M-1}\left|U_{j}\right|^{2}, \quad \|U\|_{l^{\infty}}^{2}=\mathop {\max }\limits_{0 \le j \le M } \left| {{U_j}} \right|, \quad U \in X_{M},
\end{equation}
For the CNFD method \eqref{fuz4} to \eqref{x1}-\eqref{x2}, we obtain the mass and energy conservative properties as below.
\begin{lemma}\label{lem}
The CNFD method \eqref{fuz4} conserves the mass in the discretized level, that is
\begin{equation}\label{LME1}
\left\|\varPhi^{n}\right\|_{l^{2}}^{2}:=h \sum_{j=0}^{M-1}\left|\varPhi_{j}^{n}\right|^{2} \equiv h \sum_{j=0}^{M-1}\left|\varPhi_{j}^{0}\right|^{2}=
\left\|\varPhi^{0}\right\|_{l^{2}}^{2}=
h\sum_{j=0}^{M-1}\left|\varPhi_{0}\left(x_{j}\right)\right|^{2},
\quad n \geq 0.
\end{equation}
Moreover, if both $\,V(t,x)=V(x)$ and $A_1(t,x)=A_1(x)$ remain time independent, the method \eqref{fuz4} conserves the energy as well,
\begin{equation}\label{LME2}
\begin{aligned} E_{h}^{n} &=h \sum_{j=0}^{M-1}\left[-\frac{i}{\varepsilon}\left(\varPhi_{j}^{n}\right)^{*} \sigma_{1} \delta_{x} \varPhi_{j}^{n}+\frac{1}{\varepsilon}\left(\varPhi_{j}^{n}\right)^{*} \sigma_{3} \varPhi_{j}^{n}+V_{j}\left|\varPhi_{j}^{n}\right|^{2}-A_{1, j}\left(\varPhi_{j}^{n}\right)^{*} \sigma_{1} \varPhi_{j}^{n}\right]
\\ & \equiv E_{h}^{0}, \quad n \geq 0,
\end{aligned}
\end{equation}
in which $V_j=V(x_j)$ and $A_{1,j}=A_1(x_j)$ for $j=0,1,\cdots,M.$
\end{lemma}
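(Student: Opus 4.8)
The plan is to prove mass conservation first, as it is the simpler and more robust of the two claims. I would take the discrete inner product of the scheme \eqref{fuz4} with $\varPhi_j^{n+\frac{1}{2}}$, that is, multiply \eqref{fuz4} on the left by $h\sum_{j=0}^{M-1}(\varPhi_j^{n+\frac{1}{2}})^*$, and then take the real part (or add the equation to its conjugate transpose). The left-hand side $i\,h\sum_j (\varPhi_j^{n+\frac{1}{2}})^*\delta_t^+\varPhi_j^n$ contributes, after taking real parts, a telescoping quantity proportional to $\|\varPhi^{n+1}\|_{l^2}^2-\|\varPhi^{n}\|_{l^2}^2$, since $2\mathrm{Re}\big((\varPhi^{n+1}+\varPhi^{n})^*(\varPhi^{n+1}-\varPhi^{n})\big)=2(|\varPhi^{n+1}|^2-|\varPhi^{n}|^2)$. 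The crux is then to show that every term on the right-hand side contributes something purely imaginary (so that its real part vanishes), which forces the $l^2$-norm to be constant in $n$; iterating from $n=0$ gives \eqref{LME1}.

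The key structural facts I would invoke are: the matrices $\sigma_1,\sigma_3$ are Hermitian, and the potentials $V_j^{n+\frac12}, A_{1,j}^{n+\frac12}$ are real, so the pointwise quadratic forms $(\varPhi_j^{n+\frac12})^*\sigma_3\varPhi_j^{n+\frac12}$, $(\varPhi_j^{n+\frac12})^*\sigma_1\varPhi_j^{n+\frac12}$ and $|\varPhi_j^{n+\frac12}|^2$ are all real, hence the terms $\tfrac1\varepsilon\sigma_3$, the $V I_2$ term, and the $A_1\sigma_1$ term are killed by the factor $i$ in front (they become purely imaginary after multiplying by $-i$ implicit in the rearrangement). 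The only delicate term is the first-order spatial operator $-\tfrac{i}{\varepsilon}\sigma_1\delta_x$. Here I would use summation by parts: because of the periodic boundary condition \eqref{zbcn} and the convention $U_{-1}=U_{M-1}$, $U_{M+1}=U_1$, the discrete operator $\delta_x$ is skew-symmetric with respect to the $l^2$ inner product, i.e. $h\sum_j U_j^*\,\delta_x W_j = -\,h\sum_j (\delta_x U_j)^*\,W_j$. Combined with the Hermiticity of $\sigma_1$, this makes $h\sum_j(\varPhi_j^{n+\frac12})^*\sigma_1\delta_x\varPhi_j^{n+\frac12}$ purely imaginary as well, so it too drops out of the real part. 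This summation-by-parts identity for $\delta_x$ is the main technical point, but it is routine once the periodicity is used to shift the summation index.

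For the energy conservation \eqref{LME2}, when $V$ and $A_1$ are time independent, I would instead test the scheme against the discrete time difference $\delta_t^+\varPhi_j^n=(\varPhi_j^{n+1}-\varPhi_j^n)/\tau$, i.e. multiply \eqref{fuz4} by $h\sum_j(\delta_t^+\varPhi_j^n)^*$ and this time take the real part to extract the energy increment. The strategy is to recognize that the right-hand side operator $H:=\tfrac1\varepsilon(-i\sigma_1\delta_x+\sigma_3)+V_jI_2-A_{1,j}\sigma_1$ is, at the discrete level, a Hermitian (self-adjoint) operator on $X_M$ — again using skew-symmetry of $\delta_x$ with Hermitian $\sigma_1$ for the first term and Hermiticity of the remaining matrices together with real, time-independent potentials. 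For a self-adjoint $H$, the Crank--Nicolson average $H\varPhi^{n+\frac12}=\tfrac12 H(\varPhi^{n+1}+\varPhi^n)$ satisfies $\mathrm{Re}\big\langle \delta_t^+\varPhi^n,\,H\varPhi^{n+\frac12}\big\rangle=\tfrac1{2\tau}\big(\langle\varPhi^{n+1},H\varPhi^{n+1}\rangle-\langle\varPhi^n,H\varPhi^n\rangle\big)$, which is exactly $\tfrac1{2\tau}(E_h^{n+1}-E_h^n)$ up to the real factor. Meanwhile the left-hand side $i\,h\sum_j(\delta_t^+\varPhi_j^n)^*\delta_t^+\varPhi_j^n = i\|\delta_t^+\varPhi^n\|_{l^2}^2$ is purely imaginary, so its real part vanishes. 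Equating real parts yields $E_h^{n+1}=E_h^n$, and iteration gives \eqref{LME2}.

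I expect the main obstacle to be bookkeeping rather than deep: one must carefully verify the discrete summation-by-parts (skew-symmetry of $\delta_x$) using the periodic index conventions $U_{-1}=U_{M-1}$ and $U_{M+1}=U_1$, and confirm that the Crank--Nicolson midpoint structure is precisely what makes the telescoping of the energy work for a \emph{self-adjoint} operator — this telescoping fails for a general (non-symmetric) discretization, so the argument genuinely uses both the Hermiticity of the Dirac matrices and the time symmetry of the scheme. Time dependence of $V,A_1$ is exactly what breaks energy conservation (the operator $H$ would then differ at $t_n$ and $t_{n+1}$), which is why that hypothesis is imposed only for \eqref{LME2} and not for \eqref{LME1}.
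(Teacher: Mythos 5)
Your proposal is correct and takes essentially the same route as the paper: the paper likewise tests the scheme against $\varPhi_j^{n+\frac12}$ (taking the part of the identity in which the Hermitian right-hand side drops out) to get the mass telescoping, and against $(\varPhi_j^{n+1}-\varPhi_j^n)^*$ with the real part for the energy, with the skew-symmetry of $\delta_x$ under the periodic indexing supplying the summation-by-parts cancellation in both steps. One bookkeeping caution: with the factor $i$ kept on the left-hand side of \eqref{fuz4}, it is the \emph{imaginary} part of the tested equation that produces $\|\varPhi^{n+1}\|_{l^2}^2-\|\varPhi^n\|_{l^2}^2$ (this is what the paper writes), and your version is consistent only because of your parenthetical ``multiplying by $-i$'' rearrangement, which turns that imaginary part into your real part.
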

\begin{proof}
(\romannumeral1) First of all, we can prove the mass conservation in \eqref{LME1}. Multiply both sides of equation \eqref{fuz4} from the left by $h\tau(\varPhi_j^{n+\frac{1}{2}})^*$ and take its imaginary part, we obtain that
\begin{equation}\label{MEP1}
h\left|\varPhi_{j}^{n+1}\right|^{2}=h\left|\varPhi_{j}^{n}\right|^{2}-\frac{\tau h}{2 \varepsilon}\left[\left(\varPhi_{j}^{n+\frac{1}{2}}\right)^{*} \sigma_{1} \delta_{x} \varPhi_{j}^{n+\frac{1}{2}}+\left(\varPhi_{j}^{n+\frac{1}{2}}\right)^{T} \sigma_{1} \delta_{x} \overline{\varPhi}_{j}^{n+\frac{1}{2}}\right],\quad n\ge0,\; j=0,1,\cdot\cdot\cdot M-1.
\end{equation}
Summing up \eqref{MEP1} for $j=0,1,\cdot\cdot\cdot,M-1,$ as well as noticing \eqref{pau}, we have
\begin{equation}\label{MEP2}
\begin{aligned}\left\|\varPhi^{n+1}\right\|_{l^{2}}^{2}=
&\left\|\varPhi^{n}\right\|_{l^{2}}^{2}-\frac{\tau h}{2 \varepsilon}  \sum_{j=0}^{M-1}\left[\left(\varPhi_{j}^{n+\frac{1}{2}}\right)^{*} \sigma_{1} \delta_{x} \varPhi_{j}^{n+\frac{1}{2}}+\left(\varPhi_{j}^{n+\frac{1}{2}}\right)^{T} \sigma_{1} \delta_{x}\overline{\varPhi}_{j}^{n+\frac{1}{2}}\right] \\
=&\left\|\varPhi^{n}\right\|_{l^{2}}^{2}-\frac{\tau}{4 \varepsilon} \sum_{j=0}^{M-1}\left[\left(\varPhi_{j}^{n+\frac{1}{2}}\right)^{*} \sigma_{1} \varPhi_{j+1}^{n+\frac{1}{2}}+\left(\varPhi_{j}^{n+\frac{1}{2}}\right)^{T} \sigma_{1} \overline{\varPhi}_{j+1}^{n+\frac{1}{2}}\right.\\
&\qquad\quad\ -\left.\left(\varPhi_{j+1}^{n+\frac{1}{2}}\right)^{*} \sigma_{1} \varPhi_{j}^{n+\frac{1}{2}}-\left(\varPhi_{j+1}^{n+\frac{1}{2}}\right)^{T} \sigma_{1} \overline{\varPhi}_{j}^{n+\frac{1}{2}}\right] \\=&\left\|\varPhi^{n}\right\|_{l^{2}}^{2}, \quad n \geq 0, \end{aligned}
\end{equation}
which directly gives \eqref{LME1} by induction.
\\
(\romannumeral2) Secondly, we prove the energy conservation in \eqref{LME2}. Multiply both sides of \eqref{fuz4} from the left by $2h(\varPhi_j^{n+1}-\varPhi_j^n)^*$ and take its real part, we obtain for $j=0,1,\cdot\cdot\cdot M-1,$
\begin{equation}\label{EP1}
\begin{array}{l}
{-h\operatorname{Re}\left[\dfrac{i}{\varepsilon}\left(\varPhi_{j}^{n+1}-\varPhi_{j}^{n}\right)^{*} \sigma_{1} \delta_{x}\left(\varPhi_{j}^{n+1}+\varPhi_{j}^{n}\right)\right]
+\dfrac{h}{\varepsilon}\left[\left(\varPhi_{j}^{n+1}\right)^{*} \sigma_{3} \varPhi_{j}^{n+1}-\left(\varPhi_{j}^{n}\right)^{*} \sigma_{3} \varPhi_{j}^{n}\right]} \\
{+hV_{j}\left(\left|\varPhi_{j}^{n+1}\right|^{2}-\left|\varPhi_{j}^{n}\right|^{2}\right)-h A_{1, j}\left[\left(\varPhi_{j}^{n+1}\right)^{*} \sigma_{1} \varPhi_{j}^{n+1}-\left(\varPhi_{j}^{n}\right)^{*} \sigma_{1} \varPhi_{j}^{n}\right]=0},\quad n\ge0.
\end{array}
\end{equation}
Then sum up \eqref{EP1} for $j=0,1,\cdot\cdot\cdot,M-1,$  notice the above mass conservation property and the summation by parts formula, we get
\begin{equation}\label{EP2}
\begin{aligned}
& h \sum_{j=0}^{M-1} \operatorname{Re}\left[\frac{i}{\varepsilon}\left(\varPhi_{j}^{n+1}-
\varPhi_{j}^{n}\right)^{*} \sigma_{1} \delta_{x}\left(\varPhi_{j}^{n+1}+\varPhi_{j}^{n}\right)\right] \\=& \operatorname{Re}\bigg[\frac{i h}{\varepsilon} \sum_{j=0}^{M-1}\left(\varPhi_{j}^{n+1}\right)^{*} \sigma_{1} \delta_{x} \varPhi_{j}^{n+1}-\frac{i h}{\varepsilon} \sum_{j=0}^{M-1}\left(\varPhi_{j}^{n}\right)^{*} \sigma_{1} \delta_{x} \varPhi_{j}^{n} \bigg],
\end{aligned}
\end{equation}
and
\begin{equation}\label{EP3}
\begin{aligned}
&-\operatorname{Re}\bigg(\frac{i h}{\varepsilon} \sum_{j=0}^{M-1}\left(\varPhi_{j}^{n+1}\right)^{*} \sigma_{1} \delta_{x} \varPhi_{j}^{n+1}\bigg)
+\frac{h}{\varepsilon}\sum_{j=0}^{M-1}\left(\varPhi_{j}^{n+1}\right)^{*} \sigma_{3}\varPhi_{j}^{n+1}
+h \sum_{j=0}^{M-1} V_{j}\left|\varPhi_{j}^{n+1}\right|^{2}\\
&-h \sum_{j=0}^{M-1} A_{1, j}\left(\varPhi_{j}^{n+1}\right)^{*} \sigma_{1} \varPhi_{j}^{n+1}
=-\operatorname{Re}\bigg(\frac{i h}{\varepsilon} \sum_{j=0}^{M-1}\left(\varPhi_{j}^{n}\right)^{*} \sigma_{1} \delta_{x} \varPhi_{j}^{n}\bigg)+\frac{h}{\varepsilon} \sum_{j=0}^{M-1}\left(\varPhi_{j}^{n}\right)^{*} \sigma_{3} \varPhi_{j}^{n}\\
&+h \sum_{j=0}^{M-1} V_{j}\left|\varPhi_{j}^{n}\right|^{2}
 -h \sum_{j=0}^{M-1} A_{1, j}\left(\varPhi_{j}^{n}\right)^{*} \sigma_{1} \varPhi_{j}^{n},\quad n\ge0,
\end{aligned}
\end{equation}
which directly demonstrates \eqref{LME2}.
\end{proof}
\subsection{Error estimate}
Denote $0<T<T^*$ with $T^*$ being the maximal existence time of the solution, and  $\Omega_T=[0,T]\times\Omega$ with $\Omega=(a,b).$ In order to get the appropriate error estimates, we assume that the exact solution of \eqref{x1} satisfies
$
\varPhi \in C^{3}\big([0, T] ;\left(L^{\infty}(\Omega)\right)^{2}\big) \cap C^{2}\big([0, T] ;\left(W_{p}^{1, \infty}(\Omega)\right)^{2}\big) \cap C^{1}\big([0, T] ;\left(W_{p}^{2, \infty}(\Omega)\right)^{2}\big) \cap C\big([0, T] ;\left(W_{p}^{3, \infty}(\Omega)\right)^{2}\big)$ and
\begin{equation}\label{asu}
(A)\qquad \left\|\frac{\partial^{r+s}}{\partial t^{r} \partial x^{s}} \varPhi\right\|_{L^{\infty}\left([0, T] ;\left(L^{\infty}(\Omega)\right)^{2}\right)} \lesssim \frac{1}{\varepsilon^r},\qquad 0 \leq r \leq 3,~0 \leq r+s \leq 3,
~0<\varepsilon \leq 1,
\end{equation}
in which $W_p^{m,\infty}(\Omega)=\{u|u\in W^{m,\infty}(\Omega),\;\partial_x^l u(a)=\partial_x^l u(b),\;\, l=0,\ldots,m-1\}$ for $m\ge1$ and here the boundary values are understood in the trace sense. In the follow-up discussion,
we will omit $\Omega$ when referring to the space norm taken on $\Omega$. Besides, we assume that the electric and magnetic potentials satisfy $V\in C(\overline{\Omega}_T),\,A_1\in C(\overline{\Omega}_T)$ and we denote
\begin{equation}\label{lefs2}
(B)\qquad V_{\max }:=\max _{(t, x) \in \overline{\Omega}_T}|V(t, x)|, \quad A_{1, \max }:=\max _{(t, x) \in \overline{\Omega}_T}\left|A_{1}(t, x)\right|.
\end{equation}
Here we define the grid error function $\textbf{e}^n=(\textbf{e}_0^n,\textbf{e}_1^n,...,\textbf{e}_M^n)^T\in X_M$ as following:
\begin{equation}\label{errfun}
\textbf{e}_j^n:=\varPhi(t_n,x_j)-\varPhi_j^n,\qquad j=0,1,\cdots,M,~n\ge0,
\end{equation}
in which $\varPhi_j^n$ being the numerical approximation of $\varPhi(t_n,x_j)$ from the finite difference methods. For the CNFD method \eqref{fuz4}, we could derive the error estimates as follow.
\begin{theorem}\label{4th4}
Under the assumptions in (A) and (B), there exist the constants $h_0>0,\,\tau_0>0$ independent of $\varepsilon$ and sufficiently small, such that for any $0<\varepsilon\le1,\,0<h \le h_0$ and $0<\tau\le \tau_0,$ for the CNFD method \eqref{fuz4} with \eqref{zbcn}, we obtain the error estimate on the wave function as below
\begin{equation}\label{tccnfd}
\left\|\mathrm{\textbf{e}}^{n}\right\|_{l^{2}} \lesssim \frac{h^{2}}{\varepsilon}+\frac{\tau^{2}}{\varepsilon^{3}}, \qquad 0 \leq n \leq \frac{T}{\tau}.
\end{equation}
\end{theorem}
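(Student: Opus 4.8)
The plan is to follow the standard local-truncation-error plus discrete-energy-method route, exploiting that the CNFD scheme is linear and that the operators appearing on the right-hand side are (discretely) Hermitian or skew-adjoint. First I would define the local truncation error $\boldsymbol{\eta}^{n}=(\eta_{0}^{n},\dots,\eta_{M}^{n})^{T}\in X_{M}$ by inserting the exact solution into \eqref{fuz4},
\begin{equation*}
\eta_{j}^{n}:=i\,\delta_{t}^{+}\varPhi(t_{n},x_{j})-\frac{1}{\varepsilon}\bigl(-i\sigma_{1}\delta_{x}+\sigma_{3}\bigr)\frac{\varPhi(t_{n+1},x_{j})+\varPhi(t_{n},x_{j})}{2}-\Bigl(V_{j}^{n+\frac12}I_{2}-A_{1,j}^{n+\frac12}\sigma_{1}\Bigr)\frac{\varPhi(t_{n+1},x_{j})+\varPhi(t_{n},x_{j})}{2},
\end{equation*}
and then Taylor-expand every term about the midpoint $(t_{n}+\tau/2,x_{j})$. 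The difference quotient $\delta_{t}^{+}$ approximates $\partial_{t}$ with remainder $O(\tau^{2}\partial_{t}^{3}\varPhi)$; the central difference $\delta_{x}$ contributes $O(h^{2}\partial_{x}^{3}\varPhi)$; and the averaging $\tfrac12(\varPhi^{n+1}+\varPhi^{n})$ replaces $\varPhi^{n+1/2}$ with error $O(\tau^{2}\partial_{t}^{2}\varPhi)$. Invoking assumption (A), namely $\|\partial_{t}^{3}\varPhi\|\lesssim\varepsilon^{-3}$, $\|\partial_{x}^{3}\varPhi\|\lesssim1$, $\|\partial_{t}^{2}\varPhi\|\lesssim\varepsilon^{-2}$ and $\|\partial_{t}^{2}\partial_{x}\varPhi\|\lesssim\varepsilon^{-2}$, and tracking the explicit prefactor $1/\varepsilon$ on the spatial-derivative and $\sigma_{3}$ terms, I obtain $\|\boldsymbol{\eta}^{n}\|_{l^{2}}\lesssim h^{2}/\varepsilon+\tau^{2}/\varepsilon^{3}$, which already exhibits the scaling of the final bound.

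Next I would subtract \eqref{fuz4} from the identity satisfied by the exact solution to obtain the error equation
\begin{equation*}
i\,\delta_{t}^{+}\mathbf{e}_{j}^{n}=\frac{1}{\varepsilon}\bigl(-i\sigma_{1}\delta_{x}+\sigma_{3}\bigr)\mathbf{e}_{j}^{n+\frac12}+\Bigl(V_{j}^{n+\frac12}I_{2}-A_{1,j}^{n+\frac12}\sigma_{1}\Bigr)\mathbf{e}_{j}^{n+\frac12}+\eta_{j}^{n},\qquad \mathbf{e}_{j}^{n+\frac12}=\tfrac12\bigl(\mathbf{e}_{j}^{n+1}+\mathbf{e}_{j}^{n}\bigr),
\end{equation*}
with $\mathbf{e}^{0}=0$. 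The energy step multiplies this from the left by $h\tau\,(\mathbf{e}_{j}^{n+\frac12})^{*}$, sums over $j$, and takes the imaginary part, exactly mirroring the mass-conservation computation in Lemma~\ref{lem}. The left side yields $\tfrac12(\|\mathbf{e}^{n+1}\|_{l^{2}}^{2}-\|\mathbf{e}^{n}\|_{l^{2}}^{2})$. On the right side, every physical term vanishes: $(\mathbf{e}^{n+1/2})^{*}\sigma_{3}\mathbf{e}^{n+1/2}$, $(\mathbf{e}^{n+1/2})^{*}\sigma_{1}\mathbf{e}^{n+1/2}$ and the $V$-term are real because $\sigma_{1},\sigma_{3}$ are Hermitian, while summation by parts under the periodic boundary conditions \eqref{zbcn} makes $h\sum_{j}(\mathbf{e}_{j}^{n+1/2})^{*}\sigma_{1}\delta_{x}\mathbf{e}_{j}^{n+1/2}$ purely imaginary, so $-i/\varepsilon$ times it is real. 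Hence only the truncation term survives, giving $\tfrac12(\|\mathbf{e}^{n+1}\|_{l^{2}}^{2}-\|\mathbf{e}^{n}\|_{l^{2}}^{2})=\operatorname{Im}\bigl[h\tau\sum_{j}(\mathbf{e}_{j}^{n+\frac12})^{*}\eta_{j}^{n}\bigr]$.

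Finally I would bound the right side by Cauchy--Schwarz and Young's inequality, using $\|\mathbf{e}^{n+1/2}\|_{l^{2}}^{2}\le\tfrac12(\|\mathbf{e}^{n+1}\|_{l^{2}}^{2}+\|\mathbf{e}^{n}\|_{l^{2}}^{2})$, to arrive at
\begin{equation*}
(1-\tfrac{\tau}{2})\|\mathbf{e}^{n+1}\|_{l^{2}}^{2}\le(1+\tfrac{\tau}{2})\|\mathbf{e}^{n}\|_{l^{2}}^{2}+\tau\,\|\boldsymbol{\eta}^{n}\|_{l^{2}}^{2}.
\end{equation*}
Choosing $\tau_{0}$ small enough that $1-\tau/2$ is bounded away from zero lets me divide, sum from $0$ to $n-1$, and apply the discrete Gronwall inequality; since there are at most $T/\tau$ steps and $\sum_{k}\tau\|\boldsymbol{\eta}^{k}\|_{l^{2}}^{2}\lesssim T\max_{k}\|\boldsymbol{\eta}^{k}\|_{l^{2}}^{2}$, I conclude $\|\mathbf{e}^{n}\|_{l^{2}}^{2}\lesssim(h^{2}/\varepsilon+\tau^{2}/\varepsilon^{3})^{2}$, and taking the square root gives \eqref{tccnfd}. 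I should also note in passing that the implicit system is uniquely solvable, since the same skew-Hermitian structure shows the propagation operator is a bounded perturbation of the identity that preserves the $l^{2}$ norm. I expect the genuine crux to be the truncation-error bookkeeping of the first paragraph: all the $\varepsilon$ degradation is produced there, and one must carefully separate the $O(h^{2}/\varepsilon)$ spatial contribution (which uses only the $\varepsilon$-uniform bound on $\partial_{x}^{3}\varPhi$) from the $O(\tau^{2}/\varepsilon^{3})$ temporal contribution; the energy and Gronwall steps are essentially routine for this linear, unconditionally stable scheme, with $h_{0},\tau_{0}$ retained mainly for uniformity with the conditionally stable methods treated later.
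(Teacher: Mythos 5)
Your proposal is correct and follows essentially the same route as the paper's own proof: define the local truncation error and bound it by $h^{2}/\varepsilon+\tau^{2}/\varepsilon^{3}$ via Taylor expansion under assumption (A), derive the error equation, multiply by the conjugate of the averaged error and take imaginary parts so that the Hermitian/skew-Hermitian terms drop out (mirroring the mass-conservation lemma), then close with Cauchy's inequality and the discrete Gronwall inequality. The only differences are cosmetic (your factor-$\tfrac12$ normalization of the test vector and the explicit remark on unique solvability), so nothing needs to change.
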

\begin{proof}
The local truncation error ${\xi}^n=({\xi}_0^n,{\xi}_1^n,\ldots,
{\xi}_M^n)^T\in X_M$ of the CNFD \eqref{fuz4} with \eqref{zbcn} for $0\le j\le M-1$ and $n\ge0$ is defined as follows
\begin{equation}\label{ccff1}
{\xi}_{j}^{n}:=i \delta_{t}^+ \varPhi(t_n,x_j)+\left[\frac{i}{\varepsilon} \sigma_{1} \delta_{x}
-\Big(\frac{\sigma_{3}}{\varepsilon} +V_{j}^{n+\frac{1}{2}} I_{2}-A_{1, j}^{n+\frac{1}{2}} \sigma_{1}\Big) \right]\varPhi(t_{n+\frac{1}{2}},x_j),
\end{equation}
by using the Taylor expansion and triangle inequality, and by noticing the assumptions (A) and (B), we obtain that
\begin{equation}\label{ccff2}
 \begin{aligned}
|{\xi}_{j}^{n}|
&\le\frac{ \tau^2}{6}\| \partial_{t t t} \varPhi \|_{l^\infty}
+\frac{ \tau^2}{4\varepsilon}\| \partial_{x t t} {\varPhi\|_{l^\infty}}
+\frac{h^{2}}{6\varepsilon} \| \partial_{x x x} {\varPhi\|_{l^\infty}}
+\frac{\tau^{2}}{4}\Big(\frac{1}{\varepsilon}+V_{max}+A_{max} \Big) \|\partial_{t t} \varPhi \|_{l^\infty}\\
&\lesssim \frac{\tau^2}{\varepsilon^3}+ \frac{h^2}{\varepsilon}+ \frac{\tau^2}{\varepsilon^2}\\
&\lesssim \frac{h^2}{\varepsilon}+ \frac{\tau^2}{\varepsilon^3},\qquad j=0,1,\cdot\cdot\cdot,M-1,~ n\ge 0,
\end{aligned}
\end{equation}
hence, we have
\begin{equation}\label{ccff4}
\left\|{\xi}^{n}\right\|_{l^{\infty}}=\max _{0 \leq j \leq M-1}\left|{\xi}_{j}^{n}\right| \lesssim \frac{h^2}{\varepsilon}+\frac{\tau^2}{\varepsilon^3},\qquad
\left\|{\xi}^{n}\right\|_{l^{2}} \lesssim\left\|{\xi}^{n}\right\|_{l^{\infty}} \lesssim \frac{h^{2}}{\varepsilon}+\frac{\tau^2}{\varepsilon^3}, \quad n \geq 0,
\end{equation}
Subtracting \eqref{fuz4} from \eqref{ccff1} and noticing \eqref{errfun}, we obtain the error function with $0\le j\le M-1$ and $n\ge0$ as below
\begin{equation}\label{ccff41}
 i \delta_{t}^+ \mathbf{e}_{j}^{n}=-\frac{i}{\varepsilon} \sigma_{1} \delta_{x} \mathbf{e}_{j}^{n+\frac{1}{2}}+
\left(\frac{\sigma_{3}}{\varepsilon}+V_{j}^{n+\frac{1}{2}} I_{2}-A_{1, j}^{n+\frac{1}{2}} \sigma_{1}\right) \mathbf{e}_{j}^{n+\frac{1}{2}}+{\xi}_{j}^{n},
\end{equation}
here take its initial and boundary conditions as
\begin{equation}\label{cf5}
 \mathbf{e}_{0}^{n}=\mathbf{e}_{M}^{n}, \quad \mathbf{e}_{-1}^{n}=\mathbf{e}_{M-1}^{n}, \quad n \geq 0, \quad \mathbf{e}_{j}^{0}=\mathbf{0}, \quad j=0,1, \ldots, M.
\end{equation}
Multiply $h\tau(\mathbf{e}_j^{n+1}+\mathbf{e}_j^{n})^*$ from the left on both sides of \eqref{ccff41} and take the imaginary part, then sum up for $j=0,1,\cdot\cdot\cdot,M-1$ and use Cauchy inequality again, we obtain
\begin{equation}\label{ccff8}
 \begin{aligned}
\left\|\mathbf{e}^{n+1}\right\|_{l^{2}}^{2}-\left\|\mathbf{e}^{n}\right\|_{l^{2}}^{2}
 & =\tau \operatorname{Im}\bigg[h \sum_{j=0}^{M-1}\left(\mathbf{e}_{j}^{n+1}+\mathbf{e}_{j}^{n}\right)^{*} {{\xi}}_{j}^{n}\bigg]\\
 & \lesssim \tau\left(\|\mathbf{e}^{n+1}\|_{l^2}^2+\|\mathbf{e}^{n}\|_{l^2}^2\right)
 +\tau\|{{\xi}}^{n}\|_{l^2}^2, \quad n \geq 0,
 \end{aligned}
\end{equation}
by noticing \eqref{ccff4} and summing the inequality \eqref{ccff8} for $n=0,1,2,\cdot\cdot\cdot,m-1,$ we obtain that
\begin{equation}\label{ccff9}
\left\|\mathbf{e}^{m}\right\|_{l^{2}}^{2}-\left\|\mathbf{e}^{0}\right\|_{l^{2}}^{2}
\lesssim\tau \sum_{s=0}^{m}\|\mathbf{e}^{s}\|_{l^2}^2
+m\tau\left(\frac{h^{2}}{\varepsilon}+\frac{\tau^2}{\varepsilon^3}\right)^2,
\quad 1 \leq m \leq \frac{T}{\tau},
\end{equation}
where $\left\|\mathbf{e}^{0}\right\|_{l^{2}}^{2}=0.$ By taking $\tau_0$ sufficiently small and using the discrete Gronwall's inequality, we get
\begin{equation}\label{ccff10}
\left\|\mathbf{e}^{m}\right\|^2_{l^{2}}
\lesssim \left(\frac{h^{2}}{\varepsilon}+\frac{\tau^{2}}{\varepsilon^{3}}\right)^2, \quad 1 \leq m \leq \frac{T}{\tau},
\end{equation}
which directly demonstrates the error estimate \eqref{tccnfd}.
\end{proof}
Actually, in the massless and nonrelativistic regime, based on  Theorem \ref{4th4}, when given an accuracy bound $\delta>0,$ the $\varepsilon$-scalability (or resolution) of the CNFD method is:
\begin{equation}\label{g1}
h=O(\sqrt{\delta \varepsilon})=O(\sqrt{\varepsilon}), \quad \tau=O\left(\sqrt{\delta\varepsilon^{3}}\right)=O\left(\sqrt{\varepsilon^{3}}\right), \quad 0<\varepsilon \ll 1.
\end{equation}

Furthermore, we get the following error estimates of the the total density and current density for the CNFD method.

\begin{cor}\label{1co1}
Under the assumptions in (A) and (B), there exist the constants $h_0>0$, $\tau_0>0$ independent of $\varepsilon$ and sufficiently small, such that for any $0<\varepsilon\le 1$, if $\,0<h\le h_0$ and $\,0<\tau\le\tau_0$, for the CNFD method \eqref{fuz4}, we obtain the error estimate on the total density as follow
\begin{equation}\label{colp}
\left\|\rho^n-\rho(t_n,\cdot)\right\|_{l^{2}} \lesssim \frac{h^{2}}{\varepsilon}+\frac{\tau^{2}}{\varepsilon^{3}}, \qquad 0 \leq n \leq \frac{T}{\tau},
\end{equation}
in which $\rho^n$ is from the wave function $\varPhi^n$ in \eqref{pas} with $d=1$.
\end{cor}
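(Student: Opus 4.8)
The plan is to transfer the wave-function error estimate of Theorem~\ref{4th4} to the density. Since $\rho(t_n,x_j)=\varPhi(t_n,x_j)^*\varPhi(t_n,x_j)$ and $\rho_j^n=(\varPhi_j^n)^*\varPhi_j^n$, substituting $\varPhi_j^n=\varPhi(t_n,x_j)-\mathbf{e}_j^n$ and expanding the Hermitian quadratic form produces the exact pointwise identity
\begin{equation*}
\rho(t_n,x_j)-\rho_j^n=2\,\operatorname{Re}\!\big(\varPhi(t_n,x_j)^*\mathbf{e}_j^n\big)-\big|\mathbf{e}_j^n\big|^2,\qquad 0\le j\le M-1,\ n\ge0.
\end{equation*}
First I would take the discrete $l^2$ norm in $j$ and apply the triangle inequality to split the density error into a part linear in $\mathbf{e}^n$ and a quadratic remainder,
\begin{equation*}
\big\|\rho(t_n,\cdot)-\rho^n\big\|_{l^2}\le 2\,\big\|\varPhi(t_n,\cdot)\big\|_{l^\infty}\,\big\|\mathbf{e}^n\big\|_{l^2}+\big\|\,|\mathbf{e}^n|^2\,\big\|_{l^2}.
\end{equation*}

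For the linear part, assumption (A) with $r=s=0$ gives $\|\varPhi(t_n,\cdot)\|_{l^\infty}\lesssim1$ uniformly in $\varepsilon$, so Theorem~\ref{4th4} immediately yields a contribution of size $\frac{h^2}{\varepsilon}+\frac{\tau^2}{\varepsilon^3}$, exactly the claimed order. The entire content of the corollary therefore reduces to showing that the quadratic remainder $\||\mathbf{e}^n|^2\|_{l^2}$ is no larger than this.

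The quadratic term is where the real difficulty lies. Bounding it by $\||\mathbf{e}^n|^2\|_{l^2}\le\|\mathbf{e}^n\|_{l^\infty}\|\mathbf{e}^n\|_{l^2}$ reduces the matter to an $l^\infty$ estimate on the grid error: if $\|\mathbf{e}^n\|_{l^\infty}\lesssim1$ holds uniformly, then Theorem~\ref{4th4} gives $\||\mathbf{e}^n|^2\|_{l^2}\lesssim\|\mathbf{e}^n\|_{l^2}\lesssim\frac{h^2}{\varepsilon}+\frac{\tau^2}{\varepsilon^3}$, and the two contributions combine to the stated bound. Obtaining such an $l^\infty$ bound \emph{uniformly in} $\varepsilon$ is the main obstacle, because the bare inverse inequality $\|\mathbf{e}^n\|_{l^\infty}\lesssim h^{-1/2}\|\mathbf{e}^n\|_{l^2}$ is not itself uniform in the small parameter. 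I would therefore supplement Theorem~\ref{4th4} with an a priori $l^\infty$ bound on the numerical solution $\varPhi^n$ (so that $\|\mathbf{e}^n\|_{l^\infty}\le\|\varPhi(t_n,\cdot)\|_{l^\infty}+\|\varPhi^n\|_{l^\infty}$ is controlled), obtained via a discrete Sobolev inequality supported by an auxiliary discrete $H^1$-type error estimate for $\mathbf{e}^n$. Once this boundedness is in hand the remaining computation is routine, and I expect the honest verification of uniform $l^\infty$ control, rather than the elementary algebra of the density splitting, to be the substantive step of the proof.
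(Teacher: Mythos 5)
Your algebraic reduction is correct, and you have correctly located the one nontrivial point: with your expansion $\varPhi_j^n=\varPhi(t_n,x_j)-\mathbf{e}_j^n$, the density error contains the quadratic term $\||\mathbf{e}^n|^2\|_{l^2}$, which can only be absorbed into $\frac{h^2}{\varepsilon}+\frac{\tau^2}{\varepsilon^3}$ if one has $l^\infty$ control of the error uniformly in $\varepsilon$. But your proposal stops exactly at that point: the step you yourself call substantive is deferred to ``a discrete Sobolev inequality supported by an auxiliary discrete $H^1$-type error estimate,'' and that route cannot be carried out under the paper's hypotheses. Estimating $\|\delta_x\mathbf{e}^n\|_{l^2}$ requires applying $\delta_x$ to the error equation \eqref{ccff41}; the resulting truncation error involves fourth-order derivatives of $\varPhi$ (e.g. $\partial_{xxxx}\varPhi$, $\partial_{x}\partial_{ttt}\varPhi$), whereas assumption (A) only bounds derivatives of total order $\le 3$, and the discrete product rule brings in $\delta_x V$ and $\delta_x A_1$, whereas assumption (B) only assumes continuity of the potentials. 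Moreover, even granting such an estimate, it would yield $\|\mathbf{e}^n\|_{l^\infty}\lesssim \frac{h^2}{\varepsilon}+\frac{\tau^2}{\varepsilon^3}$, which is $O(1)$ only when the right-hand side is already $O(1)$; since the corollary allows $h,\tau$ fixed while $\varepsilon\to0$, the uniform bound $\|\mathbf{e}^n\|_{l^\infty}\lesssim1$ you aim for is not obtainable this way (and is not known at all for CNFD, which has no discrete maximum principle — the only uniform a priori control is the $l^2$ mass conservation of \eqref{LME1}). So the proof has a genuine gap at its central step.

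The gap is an artifact of expanding the numerical solution into exact solution plus error inside the density, which is what creates $|\mathbf{e}_j^n|^2$. The argument this paper's framework is built for (the paper states the corollary without proof, and this is the standard argument in the FDTD literature it follows, e.g. \cite{MY19}) keeps $\varPhi_j^n$ intact: from $\rho_j^n-\rho(t_n,x_j)=\operatorname{Re}\big[(\varPhi_j^n+\varPhi(t_n,x_j))^*(\varPhi_j^n-\varPhi(t_n,x_j))\big]$ one gets the pointwise bound $|\rho_j^n-\rho(t_n,x_j)|\le\big(|\varPhi_j^n|+|\varPhi(t_n,x_j)|\big)|\mathbf{e}_j^n|$, and then Cauchy--Schwarz together with the discrete mass conservation \eqref{LME1} and assumption (A) gives
\begin{equation*}
\|\rho^n-\rho(t_n,\cdot)\|_{l^1}\lesssim\big(\|\varPhi^n\|_{l^2}+\|\varPhi(t_n,\cdot)\|_{l^2}\big)\|\mathbf{e}^n\|_{l^2}\lesssim\|\mathbf{e}^n\|_{l^2}\lesssim\frac{h^2}{\varepsilon}+\frac{\tau^2}{\varepsilon^3},
\end{equation*}
with no $l^\infty$ information whatsoever, since $\|\varPhi^n\|_{l^2}=\|\varPhi^0\|_{l^2}\lesssim1$ for free. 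Note this produces the bound in the $l^1$ norm --- which is precisely the norm the paper's numerics measure for the density in \eqref{abe} --- whereas the literal $l^2$ statement of the corollary genuinely requires the $l^\infty$ control you identified. Your diagnosis thus exposes a real weakness in the corollary as written, but the repair you propose does not go through under (A) and (B); the mass-conservation route is the one that closes the argument within the paper's assumptions.
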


\begin{cor}\label{1co2}
Under the assumptions in (A) and (B), there exist the constants $h_0>0$, $\tau_0>0$ independent of $\varepsilon$ and sufficiently small, such that for any $0<\varepsilon\le1$, if $\,0<h\le h_0$ and $\,0<\tau\le\tau_0$, for the CNFD method \eqref{fuz4}, we obtain the error estimate on the current density as follow
\begin{equation}\label{colJ}
\left\|\textrm{\emph{\textbf{J}}}^n-\emph{\textbf{J}}(t_n,\cdot)\right\|_{l^{2}} \lesssim \frac{h^{2}}{\varepsilon}+\frac{\tau^{2}}{\varepsilon^{3}}, \qquad 0 \leq n \leq \frac{T}{\tau},
\end{equation}
in which $\emph{\textbf{J}}^\textrm{n}$ is from the wave function $\varPhi^n$ in \eqref{cde} with $d=1$.
\end{cor}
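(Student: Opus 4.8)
The plan is to reduce the current-density error to the wave-function error $\mathbf{e}^n$ already controlled in Theorem \ref{4th4}, in direct analogy with the total-density estimate of Corollary \ref{1co1}; the only new ingredients are the Hermitian matrix $\sigma_1$ in place of the identity and the explicit prefactor $1/\varepsilon$ in the definition \eqref{cde}. Writing $\varPhi(t_n,x_j)=\varPhi_j^n+\mathbf{e}_j^n$ and inserting this into $\mathbf{J}_j^n=\frac{1}{\varepsilon}(\varPhi_j^n)^*\sigma_1\varPhi_j^n$, I would first expand the pointwise discrepancy $\mathbf{J}_j^n-\mathbf{J}(t_n,x_j)$ into a part linear in $\mathbf{e}_j^n$ and a part quadratic in $\mathbf{e}_j^n$. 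Because $\sigma_1^*=\sigma_1$, the linear part collapses to $-\frac{2}{\varepsilon}\operatorname{Re}\big[\varPhi(t_n,x_j)^*\sigma_1\mathbf{e}_j^n\big]$ and the quadratic remainder is $\frac{1}{\varepsilon}(\mathbf{e}_j^n)^*\sigma_1\mathbf{e}_j^n$, both manifestly real, as the current density must be.

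Next I would pass to the discrete $l^2$ norm and estimate the two parts by the Cauchy--Schwarz inequality. For the linear part, the uniform bound $\|\varPhi(t_n,\cdot)\|_{l^\infty}\lesssim1$ supplied by assumption (A) with $r=s=0$, together with $|\sigma_1|=1$, gives a contribution $\lesssim\frac{1}{\varepsilon}\|\mathbf{e}^n\|_{l^2}$. For the quadratic part I would factor out $\|\mathbf{e}^n\|_{l^\infty}$ and argue that the error is uniformly small --- either through the $l^2$ estimate combined with an inverse inequality, or by first establishing an a priori $l^\infty$ bound on the numerical solution --- so that this term is of higher order and is absorbed into the linear one. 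Collecting the contributions yields $\|\mathbf{J}^n-\mathbf{J}(t_n,\cdot)\|_{l^2}\lesssim\frac{1}{\varepsilon}\|\mathbf{e}^n\|_{l^2}$, and inserting Theorem \ref{4th4} finishes the argument.

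The step I expect to be the main obstacle is the treatment of the prefactor $1/\varepsilon$. In Corollary \ref{1co1} the total density carries no such factor, so the bare $l^2$ estimate on $\mathbf{e}^n$ transfers without loss; here the explicit $\varepsilon^{-1}$ means that a direct estimate produces $\frac{1}{\varepsilon}\big(\frac{h^2}{\varepsilon}+\frac{\tau^2}{\varepsilon^3}\big)=\frac{h^2}{\varepsilon^2}+\frac{\tau^2}{\varepsilon^4}$, one power of $\varepsilon$ worse in each term than the claimed \eqref{colJ}. Matching the stated rate therefore hinges on showing that the particular sesquilinear combination $\varPhi^*\sigma_1\varPhi$ enjoys an extra factor of $\varepsilon$ relative to the raw $l^2$ norm of $\mathbf{e}^n$; pinning down that gain is the delicate point, while the surrounding manipulations --- the algebraic expansion, the Cauchy--Schwarz step, and the absorption of the quadratic remainder --- are routine.
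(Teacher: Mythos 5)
Your reduction to Theorem \ref{4th4} is certainly the intended route---the paper in fact states Corollaries \ref{1co1} and \ref{1co2} with no proof at all, so the expansion of $\textbf{J}_j^n-\textbf{J}(t_n,x_j)$ into the linear part $-\frac{2}{\varepsilon}\operatorname{Re}\big[\varPhi(t_n,x_j)^*\sigma_1\mathbf{e}_j^n\big]$ plus the quadratic remainder $\frac{1}{\varepsilon}(\mathbf{e}_j^n)^*\sigma_1\mathbf{e}_j^n$, followed by Cauchy--Schwarz and an $l^\infty$ control of the remainder, is exactly the argument one would be expected to supply, and those manipulations in your write-up are sound. The problem is that your proposal stops at the decisive point instead of resolving it: you observe that this route yields $\frac{1}{\varepsilon}\|\mathbf{e}^n\|_{l^2}\lesssim\frac{h^2}{\varepsilon^2}+\frac{\tau^2}{\varepsilon^4}$, one power of $\varepsilon$ worse than \eqref{colJ}, and you defer the needed gain of a factor $\varepsilon$ in the form $\varPhi^*\sigma_1\mathbf{e}$ as ``the delicate point.'' A proof whose key step is named but left unproved has a genuine gap.

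Moreover, that gap cannot be closed in the direction you hope. In the massless and nonrelativistic regime there is no smallness in $\varPhi^*\sigma_1\varPhi$: for a plane-wave mode $\varPhi=e^{i\mu x}e^{-it\lambda/\varepsilon}v$ with $(\mu\sigma_1+\sigma_3)v=\lambda v$ one computes $v^*\sigma_1 v=O(1)$, so the exact current density is genuinely of size $1/\varepsilon$---unlike the massive nonrelativistic limit, where the lower spinor component is $O(\varepsilon)$ and such a gain does exist. Nor does the grid error $\mathbf{e}^n$ have any structural orthogonality to $\sigma_1\varPhi$: it is driven generically by the truncation error through \eqref{ccff41}. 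Hence the honest output of your (and the paper's implicit) argument is $\|\textbf{J}^n-\textbf{J}(t_n,\cdot)\|_{l^2}\lesssim\frac{h^2}{\varepsilon^2}+\frac{\tau^2}{\varepsilon^4}$; the bound \eqref{colJ} as literally written should either carry that extra $1/\varepsilon$ or be read as a statement about the $\varepsilon$-rescaled (relative) current error. This reading is consistent with the paper itself: in \eqref{abe} the current-density error is measured in the \emph{relative} sense, normalized by $\|\textbf{J}(t_n,\cdot)\|_{l^1}=O(1/\varepsilon)$, and it is that normalized quantity which exhibits the $\frac{h^2}{\varepsilon}+\frac{\tau^2}{\varepsilon^3}$ behavior in Table \ref{sifd2cu}. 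So your decomposition and estimates are the right ones, but the missing ingredient is not merely delicate---it is unavailable---and the correct resolution is to weaken or reinterpret the conclusion rather than to hunt for a cancellation. (A secondary point: absorbing the quadratic remainder via an inverse inequality costs a factor $h^{-1/2}$, so even that step silently requires a meshing condition tying $h,\tau$ to $\varepsilon$, which is worth stating explicitly.)
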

We remark here that Corollaries \ref{1co1}, \ref{1co2} also hold for SIFD1, LFFD and SIFD2 methods introduced later, if the corresponding stability conditions are satisfied.

\subsection{Numerical results}\label{cnfdnr}
In the discussion below, we numerically study the temporal and spatial scalability of the CNFD method for the 1D Dirac equation \eqref{x1} in the massless and nonrelativistic regime. The Dirac equation is solved on a bounded domain $\Omega=(-1,1)$ with periodic boundary conditions on $\partial\Omega$. Here the `reference exact' solution $\varPhi(t,x)=(\varPhi_1(t,x),\varPhi_2(t,x))^T$ is obtained by using the time-splitting Fourier pseudospectral method with a very small time step $\tau_e=10^{-6}$ and a very fine mesh size $h_e=1/16384$ respectively so that the errors in corresponding directions are sufficiently small. In the following example, we choose the electric and magnetic potential as
\begin{equation}\label{nte}
V(t, x)=\frac{1}{2+{\textrm{sin}(\pi x)}}, \quad A_{1}(t, x)=\frac{1}{1+{\textrm{cos}^{2}(\pi x)}},  \quad x \in \Omega, \, t \geq 0,
\end{equation}
with the initial condition as
\begin{equation}\label{nint}
\varPhi_{1}(0, x)=\textrm{sin}(\pi x)+\textrm{sin}(2\pi x), \quad \varPhi_{2}(0, x)=\textrm{cos}(\pi x), \quad x \in \Omega.
\end{equation}

In order to quantify the numerical errors of the finite difference methods for the Dirac equation, we give the error expressions of the wave function $\varPhi$, the total density $\rho$ and the current density $\textbf{J}$ as follows

\begin{eqnarray}\label{abe}
 \begin{aligned}
&{e^{h,\tau}_\varPhi(t_n)}=\big\| \varPhi^n-\varPhi(t_n,\cdot)\big\|_{l^2}=\sqrt{h\sum_{j=0}^{M-1}
|\varPhi_j^n-\varPhi(t_n,x_j)|^2}, \\
&e^{h,\tau}_\rho(t_n)=\big\| \rho^n-\rho(t_n,\cdot)\big\|_{l^1}=h\sum_{j=0}^{M-1}
|\rho_j^n-\rho(t_n,x_j)|,\\
&e^{h,\tau}_\textbf{J}(t_n)=\frac{\big\| \textbf{J}^n-\textbf{J}(t_n,\cdot)\big\|_{l^1}}
{\big\| \textbf{J}(t_n,\cdot) \big\|_{l^1}}
=\frac{{\sum\limits_{j=0}^{M-1}
|\textbf{J}_j^n-\textbf{J}(t_n,x_j)}|}{{\sum\limits_{j=0}^{M-1}
|\textbf{J}(t_n,x_j)|}},
 \end{aligned}
\end{eqnarray}
in which $e_\cdot^h(t_n),\,e_\cdot^\tau(t_n)$ are denoted as the spatial and temporal errors, respectively. Here $\rho^n$ and $\textbf{J}^n$ can be obtained by the numerical solution of $\varPhi$ in view of the definition in \eqref{pas} and \eqref{cde}.

Table \ref{cnfdht} displays spatial errors $e_\varPhi^h(t=2)$ (upper) and temporal errors $e_\varPhi^\tau(t=2)$ (lower) of the wave function respectively with different mesh size $h$ and time step $\tau$ for the CNFD method \eqref{fuz4}. From Table \ref{cnfdht}, for any $\varepsilon\in(0,1]$, we can directly observe that the CNFD method \eqref{fuz4} has second order convergence in both time and space.

\begin{table}[H]
 \caption{{\normalsize Spatial and temporal error analysis of the wave function $e_\varPhi^{h,\tau}(t=2)$ for the CNFD method}}
 \vspace{0.8mm}\label{cnfdht}
 \centering
 \setlength{\tabcolsep}{5.5mm}
 {\begin{tabular}{cccccc}
 \hline
$e_\varPhi^h(t=2)$ &$h_0=1/16$ &$h_0/2$  &$h_0/2^2$  &$h_0/2^3$ &$h_0/2^4$\\
 \hline\Xhline{0.8pt}
$\varepsilon_0=1$&\textbf{3.35E-1}&8.48E-2&2.12E-2&5.30E-3&1.33E-3\\
Order&$-$&1.98&2.00&2.00&2.00\\
$\varepsilon_0/4$&1.20&\textbf{3.22E-1}&8.11E-2&2.03E-2&5.07E-3\\
Order&$-$&\textbf{1.90}&1.99&2.00&2.00\\
$\varepsilon_0/4^{2}$&1.43&1.21&\textbf{3.22E-1}&8.09E-2&2.02E-2\\
Order&$-$&0.24&\textbf{1.91}&1.99&2.00\\
$\varepsilon_0/4^{3}$&2.96&1.41&1.21&\textbf{3.22E-1}&8.09E-2\\
Order&$-$&1.07&0.22&\textbf{1.91}&1.99\\
\hline
$e_\varPhi^\tau(t=2)$ &$\tau_0=1/40$ &$\tau_0/4$  &$\tau_0/4^2$  &$\tau_0/4^3$&$\tau_0/4^4$\\
\hline\Xhline{0.8pt}
$\varepsilon_0=1$&\textbf{3.44E-2}&2.16E-3&1.35E-4&8.75E-6&6.37E-7\\
Order&$-$&2.00&2.00&1.98&1.89\\
$\varepsilon_0/4^{2/3}$&4.45E-1&\textbf{2.87E-2}&1.80E-3&1.13E-4&7.80E-6\\
Order&$-$&\textbf{1.98}&2.00&2.00&1.93\\
$\varepsilon_0/4^{4/3}$&1.34&4.35E-1&\textbf{2.77E-2}&1.73E-3&1.10E-4\\
Order&$-$&0.81&\textbf{1.99}&2.00&1.99 \\
$\varepsilon_0/4^2$&1.87&1.31&4.34E-1&\textbf{2.74E-2}&1.79E-3\\
Order&$-$&0.25&0.80&\textbf{1.99}&1.97\\
\hline
\end{tabular}}
\end{table}

For the discretization error in space, the upper triangle above the the bold diagonal line in the top half of Table \ref{cnfdht} indicates that second order convergence exists for the CNFD method when $h=O(\varepsilon^{1/2})$. Similarly, for the discretization error in time, the CNFD method has second order convergence only when $\tau=O(\varepsilon^{3/2})$, which is verified through the upper triangle above the bold diagonal line in the bottom half of Table \ref{cnfdht}. Hence, in the massless and nonrelativistic regime, the $\varepsilon$-resolution for the CNFD method is $h=O(\varepsilon^{1/2})$ on mesh size and $\tau=O(\varepsilon^{3/2})$ on time step, which is consistent with our error estimates in Theorem \ref{4th4}.

In the following sections, for the Dirac equation \eqref{x1} in the massless and nonrelativistic regime, we will introduce another three finite difference methods including the leap-frog and two semi-implicit finite difference methods, and take the one semi-implicit method as an example to verify the conclusions in Corollaries \ref{1co1} and \ref{1co2}.

\section{A semi-implicit finite difference (SIFD1) method and its error estimate}
In this section, we propose the semi-implicit finite difference (SIFD1) method for \eqref{x1}-\eqref{x2} in which we adopt explicit discretization for the differential term and implicit discretization for the rest terms. Compared to the CNFD method in the previous section, the advantage of this scheme is that there is no need to solve coupled linear systems and thus it is more efficient.
\subsection{The SIFD1 method}
We consider the semi-implicit finite difference (SIFD1) scheme to discretize the equation \eqref{x1} for $n\ge 1,\,j=0,1,\cdot\cdot\cdot,M-1,$
\begin{equation}\label{fuz2}
 i \delta_{t} \varPhi_{j}^{n}=-\frac{i}{\varepsilon}\sigma_{1} \delta_{x} \varPhi_{j}^{n}+\left(\frac{\sigma_{3}}{\varepsilon}+V_{j}^{n} I_{2}-A_{1, j}^{n} \sigma_{1}\right) \frac{\varPhi_{j}^{n+1}+\varPhi_{j}^{n-1}}{2}.
\end{equation}
Its discrete boundary and initial conditions are the same as \eqref{zbcn}. By applying Taylor expansion and noticing the Dirac equation \eqref{x1}, the first step for the SIFD1 method \eqref{fuz2} can be designed as
\begin{equation}\label{zf1si1}
\varPhi_{j}^{1}=\varPhi_{j}^{0}-\sin \left(\frac{\tau}{\varepsilon}\right) \sigma_{1} \varPhi_{0}^{\prime}\left(x_{j}\right)-i\left(\sin \left(\frac{\tau}{\varepsilon}\right) \sigma_{3}+\tau V_{j}^{0} I_{2}-\tau A_{1, j}^{0} \sigma_{1}\right) \varPhi_{j}^{0},\quad j=0,1, \ldots, M,
\end{equation}
in which we adopt $\frac{1}{\tau}\textrm{sin}(\frac{\tau}{\varepsilon})$ instead of $\frac{1}{\varepsilon}$ such that \eqref{zf1si1} have second order convergence with $\tau$ for any fixed $0<\varepsilon\le1$ and $\|\varPhi^1\|_{l^{\infty}}=\mathop {\max }\limits_{0 \le j \le M } \left| {{\varPhi_j^1}} \right|\lesssim 1$ for any $0<\varepsilon\le1.$ Here we remark when $\varepsilon=1,$ it can be replaced by $1$.

We notice that the SIFD1 method is time symmetric, in other words, it is unchanged under $n+1\leftrightarrow n-1$ and $\tau\leftrightarrow-\tau$, and its memory cost is $O(M)$. Then the SIFD1 method \eqref{fuz2} is implicit, but for every time step of $n\ge1$, its corresponding linear system is decoupled, as well as it can be solved explicitly as below
\begin{equation*}
  \varPhi_j^{n+1}=\left\{(i-\tau V_j^n)I_2-\frac{\tau}{\varepsilon}\sigma_3+\tau A_{1,j}^n\sigma_1   \right\}^{-1}H_j^n,\qquad j=0,1,\ldots,M-1,
\end{equation*}
in which $H_j^n=\left\{\left((i+\tau V_j^n)I_2+
\frac{\tau}{\varepsilon}\sigma_3  -\tau A_{1,j}^n\sigma_1\right)\varPhi_j^{n-1}-\frac{2i\tau}{\varepsilon}
\sigma_1\delta_x\varPhi_j^n \right\}$. Thus, the computational cost of SIFD1 method per step also is $O(M)$.
\subsection{Linear stability analysis}
For any $U\in X_M,$ we denote the corresponding Fourier representation as
\begin{equation}\label{xx}
U_{j}=\sum_{l=-M / 2}^{M / 2-1} \widetilde{U}_{l} e^{i \mu_{l}\left(x_{j}-a\right)}=\sum_{l=-M / 2}^{M / 2-1} \widetilde{U}_{l} e^{2 i j l \pi / M}, \quad j=0,1, \ldots, M,
\end{equation}
in which $\mu_l$ and $\widetilde{U}_l\in\mathbb{C}^2$ are defined as
\begin{equation}\label{y}
\mu_{l}=\frac{2 l \pi}{b-a}, \quad \widetilde{U}_{l}=\frac{1}{M} \sum_{j=0}^{M-1} U_{j} e^{-2 i j l \pi / M}, \quad l=-\frac{M}{2}, \ldots, \frac{M}{2}-1.
\end{equation}
\begin{lemma}\label{lemsifd1}
The SIFD1 method \eqref{fuz2} is stable under its corresponding stability condition
\begin{equation}\label{Sta2}
0<\tau \leq \varepsilon h, \qquad h>0, ~ 0<\varepsilon \leq 1.
\end{equation}
\end{lemma}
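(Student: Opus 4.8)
The plan is to carry out a von Neumann (linear) stability analysis on the amplification structure of the three-level scheme \eqref{fuz2}. Since $V$ and $A_1$ are bounded, lower-order coefficients that do not enter the discrete symbol of the principal part, I would freeze them (equivalently set $V\equiv A_1\equiv 0$) and study the homogeneous scheme; the stability condition should be dictated solely by the transport term $-\frac{i}{\varepsilon}\sigma_1\delta_x$ and the mass term $\frac{1}{\varepsilon}\sigma_3$. Inserting the single-mode Fourier ansatz from \eqref{xx}--\eqref{y}, namely $\varPhi_j^n=\widetilde{\varPhi}_l^{\,n}e^{2ijl\pi/M}$ with $\widetilde{\varPhi}_l^{\,n}\in\mathbb{C}^2$, the central difference $\delta_x$ contributes the scalar factor $\frac{i\sin(\mu_l h)}{h}$ (using $\mu_l h=2l\pi/M$), so after multiplying through by $2\tau$ the mode obeys a three-level recursion of the form $(I_2+iq\sigma_3)\widetilde{\varPhi}_l^{\,n+1}=-2ip_l\sigma_1\widetilde{\varPhi}_l^{\,n}+(I_2-iq\sigma_3)\widetilde{\varPhi}_l^{\,n-1}$, where $q=\tau/\varepsilon$ and $p_l=\frac{\tau\sin(\mu_l h)}{\varepsilon h}$.

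Next I would reduce this to a scalar characteristic equation. Seeking solutions $\widetilde{\varPhi}_l^{\,n}=\lambda^n\mathbf{w}$ leads to the quadratic eigenvalue problem $\big[(\lambda^2-1)I_2+iq(\lambda^2+1)\sigma_3+2i\lambda p_l\sigma_1\big]\mathbf{w}=\mathbf{0}$. Because every matrix here has the form $\alpha I_2+\beta\sigma_1+\gamma\sigma_3$, whose determinant is the clean expression $\alpha^2-\beta^2-\gamma^2$ (a consequence of \eqref{pau} and the anticommutation $\sigma_1\sigma_3=-\sigma_3\sigma_1$), the solvability condition collapses to a scalar polynomial. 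Writing $y=\lambda^2$, I expect it to take the form $(1+q^2)y^2+(2q^2+4p_l^2-2)\,y+(1+q^2)=0$, a real-coefficient quadratic whose leading and constant coefficients coincide; hence its two roots satisfy $y_1y_2=1$.

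The crux of the argument is then a root-location analysis. Stability requires every amplification factor to obey $|\lambda|\le 1$, i.e. $|y|\le 1$; since $y_1y_2=1$ forces $|y_1|\,|y_2|=1$, both roots must lie exactly on the unit circle $|y|=1$. For a real quadratic with equal end coefficients this happens precisely when the discriminant is nonpositive, which I would show reduces to the single inequality $p_l^2\le 1$ (the complementary branch being automatic). Finally, demanding $|p_l|\le 1$ for every admissible mode and taking the worst case $|\sin(\mu_l h)|=1$ yields $\frac{\tau}{\varepsilon h}\le 1$, that is, the stated condition $0<\tau\le\varepsilon h$ in \eqref{Sta2}.

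I anticipate the main obstacle to be the two-component Pauli-matrix structure combined with the leap-frog (three-level) time stepping: unlike a one-step scheme there is no single amplification factor, and the reciprocal relation $y_1y_2=1$ means one can never gain strict contractivity, so the best attainable is marginal stability with all roots on the unit circle. The entire burden therefore falls on pinning the roots to $|y|=1$ rather than merely inside it, and the determinant identity for $\alpha I_2+\beta\sigma_1+\gamma\sigma_3$ is what makes this tractable. Some care is also needed at the borderline $|p_l|=1$, where the quadratic in $y$ acquires a double root at $y=-1$; one should verify that this degenerate mode does not generate secular (Jordan-block) growth, which is the only delicate point in admitting equality in $\tau\le\varepsilon h$.
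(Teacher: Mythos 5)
Your proof is correct, and it takes a genuinely more thorough route than the paper's. The paper's own proof is essentially two lines: it asserts that the implicitly treated part (the mass term $\sigma_3/\varepsilon$ and the potentials, applied to $\tfrac{1}{2}(\varPhi_j^{n+1}+\varPhi_j^{n-1})$) is ``automatically stable,'' discards it, and runs the von Neumann analysis only on the explicit leap-frog transport part $i\delta_t\varPhi_j^n=-\frac{i}{\varepsilon}\sigma_1\delta_x\varPhi_j^n$, obtaining the scalar quadratic $\xi_l^2-2i\tau\theta_l\xi_l-1=0$ with $\theta_l=\pm\sin(\mu_l h)/(\varepsilon h)$ and reading off $|\tau\theta_l|\le 1$. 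You instead retain the mass term exactly as the scheme treats it (dropping only the potentials, which the paper in effect also does), and reduce the resulting $2\times 2$ quadratic pencil, via the identity $\det(\alpha I_2+\beta\sigma_1+\gamma\sigma_3)=\alpha^2-\beta^2-\gamma^2$, to the palindromic real quadratic $(1+q^2)y^2+(4p_l^2+2q^2-2)y+(1+q^2)=0$ in $y=\lambda^2$, whose discriminant condition is exactly $p_l^2\le 1$, independent of $q=\tau/\varepsilon$. This closes a real gap in the paper's reasoning: stability of two pieces of a scheme taken separately does not in general imply stability of their combination, and your computation shows concretely that the implicit mass term only rescales the end coefficients and shifts the middle one, leaving the root-location condition --- hence \eqref{Sta2} --- unchanged. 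One refinement of your final caveat: the delicate borderline case does exhibit secular growth. At $|p_l|=1$ the quartic degenerates to $(1+q^2)(\lambda^2+1)^2=0$, while the pencil evaluated at $\lambda=\pm i$ equals $-2\left(I_2\pm\sigma_1\right)$, which has a one-dimensional kernel; the algebraic multiplicity $2$ with geometric multiplicity $1$ forces a Jordan chain and linear-in-$n$ growth (the classical marginal leap-frog instability, present whenever the grid supports a mode with $|\sin(\mu_l h)|=1$, i.e. when $M$ is a multiple of $4$). So equality in \eqref{Sta2} yields only weak (algebraic) stability --- a point the paper's criterion $|\xi_l|\le 1$ silently glosses over as well --- and your argument is airtight for $\tau<\varepsilon h$, matching the lemma under the same convention the paper adopts.
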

\begin{proof}
Due to the fact that the implicit part $\left(\sigma_{3}/\varepsilon+V_{j}^{n} I_{2}-A_{1, j}^{n} \sigma_{1}\right) \frac{\varPhi_{j}^{n+1}+\varPhi_{j}^{n-1}}{2}$ is automatically stable, here we just need to concentrate on the explicit part $i\delta_t\varPhi_j^n=-\frac{i}{\varepsilon}\sigma_1\delta_x\varPhi_j^n.$ Plugging
\begin{equation}\label{prlefd1}
\varPhi_{j}^{n}=\sum_{l=-M / 2}^{M / 2-1} \xi_{l}^{n} \widetilde{\left(\varPhi^{0}\right)}_{l} e^{i \mu_{l}\left(x_{j}-a\right)}=\sum_{l=-M / 2}^{M / 2-1} \xi_{l}^{n} \widetilde{\left(\varPhi^{0}\right)}_{l} e^{2 i j l \pi / M},\quad 0 \leq j \leq M,
\end{equation}
with $\xi_l^n\in \mathbb{C}$ being the amplification factor of the $l$-th mode in the phase space and $\widetilde{\left(\varPhi^{0}\right)}_{l}$ being the Fourier coefficient at $n=0$. Plug \eqref{prlefd1} into the SIFD1 method \eqref{fuz2}, we obtain the corresponding amplification factor $\xi_l$ satisfies
\begin{equation}\label{prlefdX1}
\xi_{l}^{2}-2 i \tau \theta_{l} \xi_{l}-1=0, \quad l=-\frac{M}{2}, \ldots, \frac{M}{2}-1,
\end{equation}
in which $\theta_{l}=\pm\frac{sin(\mu_lh)}{\varepsilon h}$. Hence the stability condition is equivalent to
\begin{equation}\label{prlefdX2}
\left|\xi_{l}\right| \leq 1 \Longleftrightarrow\left|\tau\theta_{l}\right| \leq 1, \quad l=-\frac{M}{2}, \ldots, \frac{M}{2}-1,
\end{equation}
which means $\left|\dfrac{\tau}{\varepsilon h}\right| \leq 1$, and gives $0 <\tau \leq \varepsilon h$.
\end{proof}

\subsection{Error estimate}
The error estimate for SIFD1 is given as follows.
\begin{theorem}\label{2th2}
Under the assumptions in (A) and (B), there exist the constants $h_0>0,\,\tau_0>0$ independent of $\varepsilon$ and sufficiently small, such that for any $0<\varepsilon\le1,$ if $\,0<h \le h_0,\,0<\tau\le \tau_0$ and under the stability condition \eqref{Sta2}, for the SIFD1 method \eqref{fuz2} with \eqref{zbcn} and \eqref{zf1si1}, we obtain the error estimate on the wave function as below
\begin{equation}\label{tcsifd1}
\left\|\mathrm{\textbf{e}}^{n}\right\|_{l^{2}} \lesssim \frac{h^{2}}{\varepsilon}+\frac{\tau^{2}}{\varepsilon^{3}}, \qquad 0 \leq n \leq \frac{T}{\tau}.
\end{equation}
\end{theorem}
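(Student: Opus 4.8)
The plan is to mirror the energy method of Theorem~\ref{4th4}, but adapted to the three-level structure of \eqref{fuz2}; the new ingredients are a separate estimate for the starting value $\varPhi^1$ and, crucially, the use of the CFL-type stability condition \eqref{Sta2} to absorb the explicitly treated difference term. First I would define, for $n\ge1$ and $0\le j\le M-1$, the local truncation error
\begin{equation*}
\xi_j^n := i\delta_t\varPhi(t_n,x_j) + \frac{i}{\varepsilon}\sigma_1\delta_x\varPhi(t_n,x_j) - \Big(\frac{\sigma_3}{\varepsilon}+V_j^nI_2-A_{1,j}^n\sigma_1\Big)\frac{\varPhi(t_{n+1},x_j)+\varPhi(t_{n-1},x_j)}{2}.
\end{equation*}
Expanding $\delta_t$, $\delta_x$ and the time average by Taylor's theorem about $(t_n,x_j)$, the $O(1)$ and $O(1/\varepsilon)$ terms cancel by the equation \eqref{x1}, and invoking assumption (A) (so that $\|\partial_{ttt}\varPhi\|_{l^\infty}\lesssim\varepsilon^{-3}$, $\|\partial_{tt}\varPhi\|_{l^\infty}\lesssim\varepsilon^{-2}$, $\|\partial_{xxx}\varPhi\|_{l^\infty}\lesssim1$) together with (B) yields $\|\xi^n\|_{l^2}\le\|\xi^n\|_{l^\infty}\lesssim \frac{h^2}{\varepsilon}+\frac{\tau^2}{\varepsilon^3}$, exactly as in \eqref{ccff4}. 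Separately, Taylor-expanding \eqref{zf1si1} against $\varPhi(\tau,\cdot)$ and substituting $\partial_t\varPhi(0,\cdot)$ from \eqref{x1} shows the starting error satisfies $\|\mathbf{e}^1\|_{l^2}\lesssim \frac{h^2}{\varepsilon}+\frac{\tau^2}{\varepsilon^3}$, which is why the factor $\tfrac1\tau\sin(\tau/\varepsilon)$ is inserted.

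Next, subtracting \eqref{fuz2} from the truncation-error identity gives the error recursion
\begin{equation*}
i\delta_t\mathbf{e}_j^n = -\frac{i}{\varepsilon}\sigma_1\delta_x\mathbf{e}_j^n + \Big(\frac{\sigma_3}{\varepsilon}+V_j^nI_2-A_{1,j}^n\sigma_1\Big)\frac{\mathbf{e}_j^{n+1}+\mathbf{e}_j^{n-1}}{2} + \xi_j^n,
\end{equation*}
with $\mathbf{e}^0=\mathbf{0}$ and the periodic conditions \eqref{cf5}. I would then multiply by $h\tau(\mathbf{e}_j^{n+1}+\mathbf{e}_j^{n-1})^*$, sum over $j$ and take the imaginary part. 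Two facts make this work: the implicit matrix $\frac{\sigma_3}{\varepsilon}+V_j^nI_2-A_{1,j}^n\sigma_1$ is Hermitian, so its contribution is real and drops out of the imaginary part; and the difference operator $-\frac{i}{\varepsilon}\sigma_1\delta_x$ is \emph{self-adjoint} on $X_M$ under periodicity (since $\sigma_1=\sigma_1^*$ and $\delta_x$ is skew-adjoint), so, writing $g^n:=\operatorname{Im}\big[h\sum_{j=0}^{M-1}(\mathbf{e}_j^{n+1})^*(-\tfrac{i}{\varepsilon}\sigma_1\delta_x\mathbf{e}_j^n)\big]$, its contribution telescopes to $g^n-g^{n-1}$. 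The left-hand side collapses to $\frac{1}{2\tau}(\|\mathbf{e}^{n+1}\|_{l^2}^2-\|\mathbf{e}^{n-1}\|_{l^2}^2)$. Summing the resulting identity over $n=1,\dots,m$ and telescoping produces
\begin{equation*}
\|\mathbf{e}^{m+1}\|_{l^2}^2+\|\mathbf{e}^m\|_{l^2}^2-2\tau g^m = \|\mathbf{e}^1\|_{l^2}^2 + 2\tau\sum_{n=1}^{m}\operatorname{Im}\Big[h\sum_{j=0}^{M-1}(\mathbf{e}_j^{n+1}+\mathbf{e}_j^{n-1})^*\xi_j^n\Big].
\end{equation*}

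The decisive step is to show that the discrete energy $E^m:=\|\mathbf{e}^{m+1}\|_{l^2}^2+\|\mathbf{e}^m\|_{l^2}^2-2\tau g^m$ is equivalent to $\|\mathbf{e}^{m+1}\|_{l^2}^2+\|\mathbf{e}^m\|_{l^2}^2$. Here the stability condition \eqref{Sta2} enters: since $\|\sigma_1\delta_x U\|_{l^2}\le \frac{1}{h}\|U\|_{l^2}$, one has $2\tau|g^m|\le \frac{2\tau}{\varepsilon h}\|\mathbf{e}^{m+1}\|_{l^2}\|\mathbf{e}^m\|_{l^2}\le \frac{\tau}{\varepsilon h}\big(\|\mathbf{e}^{m+1}\|_{l^2}^2+\|\mathbf{e}^m\|_{l^2}^2\big)$, and $\tau\le\varepsilon h$ forces the coefficient $\le1$, giving the two-sided bound $E^m\sim\|\mathbf{e}^{m+1}\|_{l^2}^2+\|\mathbf{e}^m\|_{l^2}^2$. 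Controlling the forcing sum by Cauchy's inequality, so that its modulus is $\lesssim \tau\sum_{n}(\|\mathbf{e}^{n+1}\|_{l^2}^2+\|\mathbf{e}^{n-1}\|_{l^2}^2)+T\max_n\|\xi^n\|_{l^2}^2$, and inserting the truncation and starting-value bounds, I would obtain $E^m\lesssim (\frac{h^2}{\varepsilon}+\frac{\tau^2}{\varepsilon^3})^2+\tau\sum_{s\le m}E^s$; the discrete Gronwall inequality (with $\tau_0$ small) then yields $E^m\lesssim (\frac{h^2}{\varepsilon}+\frac{\tau^2}{\varepsilon^3})^2$, and the energy equivalence gives \eqref{tcsifd1}. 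The main obstacle is precisely this coercivity of $E^m$ at the edge of stability: the explicit spatial operator has norm exactly $1/(\varepsilon h)$, so $\tau\le\varepsilon h$ only gives coefficient $\le1$, a borderline (non-strict) bound. I would secure a strict gap by taking the constants $h_0,\tau_0$ in the statement so that the effective ratio is bounded away from $1$, which legitimizes both the energy equivalence and the Gronwall step; the self-adjointness of $-\frac{i}{\varepsilon}\sigma_1\delta_x$ that produces the telescoping $g^n-g^{n-1}$ is the structural fact that makes the three-level energy method close at all.
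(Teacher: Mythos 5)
Your proposal follows essentially the same route as the paper's proof: your discrete energy $E^m=\|\mathbf{e}^{m+1}\|_{l^2}^2+\|\mathbf{e}^m\|_{l^2}^2-2\tau g^m$ is exactly the paper's $\mathcal{E}^{m+1}$ (its cross term $2\operatorname{Re}\big(\frac{h\tau}{\varepsilon}\sum_{j}(\mathbf{e}_j^{m+1})^*\sigma_1\delta_x\mathbf{e}_j^m\big)$ equals $-2\tau g^m$), and the truncation-error bounds, first-step estimate, telescoping via self-adjointness, Cauchy inequality, and discrete Gronwall steps all match the paper's argument. The borderline-coercivity issue you flag is treated the same way in the paper, which strengthens \eqref{Sta2} to $\tau\le\frac{1}{2}\varepsilon h$ inside the proof; note only that shrinking $h_0,\tau_0$ alone cannot bound $\tau/(\varepsilon h)$ away from $1$ --- one must tighten the ratio condition itself, exactly as the paper does.
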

\begin{proof}
The local truncation error $\eta^n=(\eta_0^n,{\eta}_1^n,\ldots,
{\eta}_M^n)^T\in X_M$ of the SIFD1 \eqref{fuz2} with \eqref{zbcn} and \eqref{zf1si1} for $0\le j\le M-1$ and $n\ge1$ is defined as follows
\begin{eqnarray}\label{cfth1}
{\eta}_{j}^{0}&:=&i\left( \delta_{t}^{+} +\frac{1}{\varepsilon} \sigma_{1} \delta_{x} \right)\varPhi_{0}\left(x_{j}\right)
-\left(\frac{\sigma_{3}}{\varepsilon} +V_{j}^{0} I_{2}-A_{1, j}^{0} \sigma_{1}\right) \varPhi_{0}\left(x_{j}\right),\\\label{cfth2}
{\eta}_{j}^{n}&:=&i\left( \delta_{t} +\frac{1}{\varepsilon} \sigma_{1} \delta_{x} \right)\varPhi(t_n,x_j)
-\left(\frac{\sigma_{3}}{\varepsilon}  +V_{j}^{n} I_{2}-A_{1, j}^{n} \sigma_{1}\right) \frac{\varPhi(t_{n+1},x_j)+\varPhi(t_{n-1},x_j)}{2}.
\end{eqnarray}
By applying the Taylor expansion to \eqref{cfth1} and \eqref{cfth2}, we get for $j=0,1,\cdot\cdot\cdot,M-1$ and $n\ge1,$
\begin{equation}\label{cf2}
\begin{aligned}
&{\eta}_{j}^{0}=\frac{i \tau}{2} \partial_{t t} \varPhi\left(\tau^{\prime}, x_{j}\right)
+\frac{ih^2}{6\varepsilon} \sigma_1 \partial_{x x x} \varPhi_{0}\left(x_j^{\prime}\right),\\
&{\eta}_{j}^{n}=\frac{i \tau^2}{6} \partial_{t t t} \varPhi\left(t_{n}^{\prime}, x_{j}\right)+\frac{i h^{2}}{6\varepsilon} \sigma_{1} \partial_{x x x} \varPhi\left(t_{n}, x_{j}^{\prime}\right)-\frac{\tau^{2}}{2 }\Big(\frac{1}{\varepsilon}\sigma_{3}+V_{j}^{n} I_{2}-A_{1, j}^{n} \sigma_{1}\Big) \partial_{t t} \varPhi\left(t_{n}^{\prime \prime}, x_{j}\right),
\end{aligned}
\end{equation}
in which $\tau'\in(0,\tau),\,t_n',t_n{''}\in(t_{n-1},t_{n+1})$ and $x'_j\in (x_{j-1},x_{j+1})$.
Noticing \eqref{x1} and the assumptions in (A) and (B), we obtian
\begin{equation}\label{cf3}
 \left|{\eta}_{j}^{0}\right| \lesssim
 \frac{h^2}{\varepsilon}+\frac{\tau}{\varepsilon^2}, \qquad
 \left|{\eta}_{j}^{n}\right| \lesssim
 \frac{h^2}{\varepsilon}+\frac{\tau^2}{\varepsilon^3}, \quad j=0,1, \ldots, M-1, \, n \geq 1,
\end{equation}
which directly implies
\begin{equation}\label{cf4}
\left\|{\eta}^{n}\right\|_{l^{\infty}}=\max _{0 \leq j \leq M-1}\left|{\eta}_{j}^{n}\right| \lesssim \frac{h^2}{\varepsilon}+\frac{\tau^2}{\varepsilon^3},\qquad
\left\|{\eta}^{n}\right\|_{l^{2}} \lesssim\left\|{\eta}^{n}\right\|_{l^{\infty}} \lesssim \frac{h^{2}}{\varepsilon}+\frac{\tau^2}{\varepsilon^3}, \quad n \geq 1.
\end{equation}
Subtracting \eqref{fuz2} from \eqref{cfth2} and noticing \eqref{errfun}, we obtain the error function with $0\le j\le M-1$ and $n\ge1$ as below
\begin{equation}\label{cf41}
 i \delta_{t} \mathbf{e}_{j}^{n}=-\frac{i}{\varepsilon} \sigma_{1} \delta_{x} \mathbf{e}_{j}^{n}+
\frac{1}{2}\left(\frac{\sigma_{3}}{\varepsilon} +V_{j}^{n} I_{2}-A_{1, j}^{n} \sigma_{1}\right) (\mathbf{e}_{j}^{n+1}+\mathbf{e}_{j}^{n-1})+{\eta}_{j}^{n},
\end{equation}
in which its initial and boundary conditions \eqref{cf5} are the same as given before. For the first step, we obtain
\begin{equation}\label{cf42}
\left\|\mathbf{e}^{1}\right\|_{l^{2}}=\tau\left\|{\eta}^{0}\right\|_{l^{2}} \lesssim
\tau\left(\frac{ h^{2}}{\varepsilon}+\frac{\tau}{\varepsilon^2} \right)\lesssim \frac{h^{2}}{\varepsilon}+\frac{\tau^{2}}{\varepsilon^3},
\end{equation}
Denote $\mathcal{E}^{n+1}$ as
\begin{equation}\label{cf6}
\mathcal{E}^{n+1}=\left\|\mathbf{e}^{n+1}\right\|_{l^{2}}^{2}+\left\|\mathbf{e}^{n}\right\|_{l^{2}}^{2}+2 \operatorname{Re}\bigg(\frac{h\tau}{\varepsilon} \sum_{j=0}^{M-1}\left(\mathbf{e}_{j}^{n+1}\right)^{*} \sigma_{1} \delta_{x} \mathbf{e}_{j}^{n}\bigg),\quad n\ge0.
\end{equation}
and under its stability condition of \eqref{Sta2}, i.e. $0<\tau \leq \varepsilon h \tau_1$ with $\tau_1=\frac{1}{2}$, which implies $\frac{\tau}{\varepsilon h}\le\frac{1}{2}$, by using the Cauchy inequality, we could derive
\begin{equation}\label{cf7}
\frac{1}{2}\left(\left\|\mathbf{e}^{n+1}\right\|_{l^{2}}^{2}+\left\|\mathbf{e}^{n}\right\|_{l^{2}}^{2}\right) \leq \mathcal{E}^{n+1} \leq \frac{3}{2}\left(\left\|\mathbf{e}^{n+1}\right\|_{l^{2}}^{2}+\left\|\mathbf{e}^{n}\right\|_{l^{2}}^{2}\right), \quad n \geq 0.
\end{equation}
From \eqref{cf42}, we have
\begin{equation}\label{epes1}
\mathcal{E}^1 \lesssim \bigg(\frac{h^2}{\varepsilon}+\frac{\tau^2}{\varepsilon^3} \bigg)^2,
\end{equation}
Multiplying $2h\tau(\mathbf{e}_j^{n+1}+\mathbf{e}_j^{n-1})^*$ from the left on both side to \eqref{cf41}, by taking its imaginary part, summing up the equation for $j=0,1,\cdot\cdot\cdot,M-1,$ and using the Cauchy inequality as before, then noticing \eqref{cf4} and \eqref{cf7}, we obtain for $n\ge1,$
\begin{equation}\label{cf8}
 \begin{aligned}
 \mathcal{E}^{n+1}-\mathcal{E}^{n}
 & =2h\tau \operatorname{Im}\bigg( \sum_{j=0}^{M-1}\left(\mathbf{e}_{j}^{n+1}+\mathbf{e}_{j}^{n-1}\right)^{*} {{\eta}}_{j}^{n}\bigg)\\
 & \lesssim \tau\Big(\mathcal{E}^{n}+\mathcal{E}^{n+1}\Big)
 +\tau\left(\frac{h^2}{\varepsilon}+\frac{\tau^2}{\varepsilon^3}\right)^{2}, \quad n \geq 0,
 \end{aligned}
\end{equation}
Summing up the above inequality of \eqref{cf8} for $n=1,2,\cdot\cdot\cdot,m-1,$ we obtain that
\begin{equation}\label{cf9}
\mathcal{E}^{m}-\mathcal{E}^{1} \lesssim \tau \sum_{s=1}^{m} \mathcal{E}^{s}+m \tau\left(\frac{h^{2}}{\varepsilon}+\frac{\tau^2}{\varepsilon^3}\right)^{2}, \quad 1 \leq m \leq \frac{T}{\tau}.
\end{equation}
Hence if we take $\tau_0$ sufficiently small, use the discrete Gronwall's inequality, and notice the inequality \eqref{epes1}, we get
\begin{equation}\label{cf10}
\mathcal{E}^{m} \lesssim\left(\frac{h^{2}}{\varepsilon}+\frac{\tau^{2}}{\varepsilon^{3}}\right)^{2}, \quad 1 \leq m \leq \frac{T}{\tau},
\end{equation}
which directly demonstrates the error estimate \eqref{tcsifd1} in view of \eqref{cf7}.
\end{proof}

From Theorem \ref{2th2}, in the massless and nonrelativistic regime, when given an accuracy bound $\delta>0,$ the $\varepsilon$-resolution of the SIFD1 method is:
\begin{equation}\label{g1}
h=O(\sqrt{\delta \varepsilon})=O(\sqrt{\varepsilon}), \quad
\tau=O\left(\sqrt{\delta\varepsilon^{3}}\right)=O\left(\sqrt{\varepsilon^{3}}\right), \quad 0<\varepsilon \ll 1.
\end{equation}

\subsection{Numerical results}
In the following numerical simulation, the electromagnetic potential, initial condition, error functions and mesh sizes are same as \eqref{nte}-\eqref{abe} in Subsection \ref{cnfdnr}. In order to satisfy its stability condition and accuracy requirement for the SIFD1 methods, in Table \ref{sifd1ht}, we take
\begin{equation}\label{stacop}
\delta_j(\varepsilon)=\frac{1}{2^k},\qquad (\varepsilon=\frac{1}{4^{2k/3}},\quad k=0,1,\ldots,\, j=1,2,\ldots)
\end{equation}

\begin{table}[H]
 \caption{{\normalsize Spatial and temporal error analysis of the wave function $e_\varPhi^{h,\tau}(t=2)$ for the SIFD1 method}}
 \vspace{0.8mm}\label{sifd1ht}
 \centering
 \setlength{\tabcolsep}{5.2mm}
 {\begin{tabular}{cccccccc}
 \hline
$e_\varPhi^h(t=2)$ &$h_0=1/16$ &$h_0/2$  &$h_0/2^2$  &$h_0/2^3$ &$h_0/2^4$\\
 \hline\Xhline{0.8pt}
$\varepsilon_0=1$&\textbf{3.35E-1}&8.48E-2&2.12E-2&5.30E-3&1.33E-3\\
Order&$-$&1.98&2.00&2.00&2.00\\
$\varepsilon_0/4$&1.20&\textbf{3.22E-1}&8.11E-2&2.03E-2&5.07E-3\\
Order&$-$&\textbf{1.90}&1.99&2.00&2.00\\
$\varepsilon_0/4^{2}$&1.43&1.21&\textbf{3.22E-1}&8.08E-2&2.02E-2\\
Order&$-$&0.24&\textbf{1.91}&1.99&2.00\\
$\varepsilon_0/4^{3}$&2.96&1.41&1.21&\textbf{3.20E-1}&7.88E-2\\
Order&$-$&1.07&0.22&\textbf{1.92}&2.02\\
\hline
\multirow{2}*{$e_\Phi^\tau(t=2)$} &$\tau_0=1/40$ &$\tau_0/4$  &$\tau_0/4^2$  &$\tau_0/4^3$&$\tau_0/4^4$\\
&$h_0=1/16$ &$h_0/4\delta_1(\varepsilon)$ &$h_0/4^2\delta_2(\varepsilon)$ &$h_0/4^3\delta_3(\varepsilon)$&$h_0/4^4\delta_4(\varepsilon)$\\
\hline\Xhline{0.8pt}
$\varepsilon_0=1$&\textbf{2.93E-1}&1.81E-2&1.13E-3&7.05E-5&4.40E-6\\
Order&$-$&2.01&2.00&2.00&2.00\\
$\varepsilon_0/4^{2/3}$&Unstable&\textbf{1.53E-1}&9.56E-3&5.98E-4&3.74E-5\\
Order&$-$&$-$&2.00&2.00&2.00\\
$\varepsilon_0/4^{4/3}$&Unstable&1.19&\textbf{7.93E-2}&4.95E-3&3.10E-4\\
Order&$-$&$-$&\textbf{1.96}&2.00&2.00\\
$\varepsilon_0/4^{2}$&Unstable&2.44&4.97E-1&\textbf{3.11E-2}&1.94E-3\\
Order&$-$&$-$&1.15&\textbf{2.00}&2.00\\
\hline
\end{tabular}}
\end{table}

Table \ref{sifd1ht} presents the spatial errors $e_\varPhi^h(t=2)$ and temporal errors $e_\varPhi^\tau(t=2)$ of the wave function with various mesh sizes by using the SIFD1 method \eqref{fuz2}. From Table \ref{sifd1ht}, for any $\varepsilon\in(0,1]$, we can observe that the SIFD1 method \eqref{fuz2} has second order convergence in space and time.  The $\varepsilon$-resolution of the SIFD1 method is still $h=O(\varepsilon^{1/2})$ and $\tau=O(\varepsilon^{3/2})$, which is verified by the upper triangles above the diagonal lines labelled with bold type in the top and bottom half of the table. Numerical results correspond well with our error estimate in Theorem \ref{2th2}.

\section{Other finite difference methods and their error estimates}
Here we propose another semi-implicit finite difference (SIFD2) scheme and the explicit leap-frog finite difference (LFFD) scheme, and establish their error estimates.
\subsection{The SIFD2 method and LFFD method}
We consider the two other finite difference methods to discretize the Dirac equation \eqref{x1} for $n\ge 1,\,j=0,1,\cdot\cdot\cdot,M-1.$ Another semi-implicit finite difference (SIFD2) scheme is given as follows
\begin{equation}\label{fuz3}
i \delta_{t} \varPhi_{j}^{n}=\frac{1}{\varepsilon}\Big(-i \sigma_{1} \delta_{x}+\sigma_{3}\Big) \frac{\varPhi_{j}^{n+1}+\varPhi_{j}^{n-1}}{2}+\Big(V_{j}^{n} I_{2}-A_{1, j}^{n} \sigma_{1}\Big) \varPhi_{j}^{n},
\end{equation}
and the leap-frog finite difference (LFFD) scheme is
\begin{equation}\label{fuz1}
i \delta_{t} \varPhi_{j}^{n}=\frac{1}{\varepsilon}\Big(-i \sigma_{1} \delta_{x} +\sigma_{3}\Big)\varPhi_{j}^{n}+\left(V_{j}^{n} I_{2}-A_{1, j}^{n} \sigma_{1}\right) \varPhi_{j}^{n}.
\end{equation}
Their discrete boundary and initial conditions are the same as \eqref{zbcn}, and the first steps for SIFD2 \eqref{fuz3}, LFFD \eqref{fuz1} are similar to SIFD1 in \eqref{zf1si1}.

Here we notice that the SIFD2 and LFFD methods are time symmetric, in other words, they remain unchanged under $n+1\leftrightarrow n-1,\,\tau\leftrightarrow-\tau$, and their memory costs are both $O(M)$. The SIFD2 scheme \eqref{fuz3} is implicit, which means that at each time step for $n\ge1$, its corresponding linear system can be decoupled in phase (Fourier) space, as well as it can be solved explicitly in phase space as following
\begin{equation*}
\widetilde{(\varPhi^{n+1})}_l=\left(iI_2-\frac{\tau \textrm{sin}(\mu_lh)}{\varepsilon h}\sigma_1-\frac{\tau}{\varepsilon}\sigma_3 \right)^{-1}L_l^n,\qquad l=-M/2,\ldots,M/2-1,
\end{equation*}
in which
\begin{equation*}
  L_l^n=\left\{\left(iI_2+ \frac{\tau \textrm{sin}(\mu_lh)}{\varepsilon h}\sigma_1+\frac{\tau}{\varepsilon}\sigma_3\right)\widetilde
  {(\varPhi^{n-1})}_l+2\tau\widetilde{(G^n\varPhi^n)}_l\right\},
\end{equation*}
and $G^n=(G_0^n,G_1^n,\ldots,G_M^n)^T\in X_M$ with $ G_j^n=V_j^nI_2-A_{1,j}^n\sigma_1$ for $j=0,1,\ldots,M,$ and hence its computational cost per step is $O(M\,\textrm{ln}\,M)$. The LFFD method \eqref{fuz1} is explicit and its computational cost per step is $O(M)$. When $\varepsilon=1$, it should be the most efficient and simplest method for the Dirac equation \eqref{maiequ} and thus the LFFD method has been widely used. From what has been analysed above on the computational cost per time step, we can conclude that the CNFD method is the most expensive one and the LFFD method is the most efficient among the four finite difference methods.

\subsection{Linear stability analysis}
In the following, in order to realize the linear stability analysis of the finite difference methods for the Dirac equation \eqref{x1} through the von Neumann method \cite{S85}, we assume that  $V(t,x)\equiv V^0$
and $A_1(t,x)\equiv A_1^0$ with $V^0$ and $A_1^0$ being two real constants. Next we have the following conclusions from the SIFD2 and LFFD methods.
\begin{lemma}\label{lemsifd2}
The SIFD2 method \eqref{fuz3} is stable under its corresponding stability condition
\begin{equation}\label{Sta3}
0<\tau \leq \frac{1}{\left|V^{0}\right|+\left|A_{1}^{0}\right|}, \qquad h>0, ~ 0<\varepsilon \leq 1.
\end{equation}
\end{lemma}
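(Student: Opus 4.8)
The plan is to follow the same von Neumann stability framework already used for SIFD1 in Lemma~\ref{lemsifd1}, but now keeping the constant potentials $V^0$ and $A_1^0$ active inside the amplification analysis rather than discarding them. First I would substitute the single-mode ansatz \eqref{prlefd1} into the SIFD2 scheme \eqref{fuz3}, using the fact that $\delta_x e^{i\mu_l(x_j-a)} = \frac{i\sin(\mu_l h)}{h} e^{i\mu_l(x_j-a)}$. This converts the spatial difference operator into the scalar factor $\frac{\sin(\mu_l h)}{h}$ and reduces the scheme to a $2\times 2$ matrix recursion relating $\xi_l^{n+1}$, $\xi_l^n$, $\xi_l^{n-1}$. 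Because SIFD2 treats the Dirac operator $\frac{1}{\varepsilon}(-i\sigma_1\delta_x + \sigma_3)$ implicitly (via the average of $n+1$ and $n-1$) while treating the potential term $G_j^n = V^0 I_2 - A_1^0\sigma_1$ explicitly at level $n$, the resulting quadratic will have the implicit Dirac piece contributing to the symmetric $\xi^2 - 1$ structure and the explicit potential piece contributing a linear-in-$\xi_l$ term.

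The key step is to diagonalize or otherwise reduce the $2\times 2$ matrix amplification problem to scalar quadratics. I would introduce the Hermitian matrix $D_l = \frac{1}{\varepsilon}\bigl(\frac{\sin(\mu_l h)}{h}\sigma_1 + \sigma_3\bigr)$ for the implicit part and $G = V^0 I_2 - A_1^0\sigma_1$ for the explicit part, so that the recursion reads schematically $i\,\frac{\xi_l^{n+1}-\xi_l^{n-1}}{2\tau} = D_l\,\frac{\xi_l^{n+1}+\xi_l^{n-1}}{2} + G\,\xi_l^n$ acting on the fixed eigenvector $\widetilde{(\varPhi^0)}_l$. Since $D_l$ and $G$ are both Hermitian $2\times 2$ matrices, the analysis proceeds by examining the eigenstructure; the crucial observation is that the implicit treatment of $D_l$ renders its contribution unconditionally stable (as already noted for the analogous term in SIFD1), so that the stability restriction arises solely from the explicitly-treated potential term $G$. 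Passing to eigenvalues, each scalar mode satisfies a quadratic of the form $\xi_l^2 - 2i\tau\lambda\,\xi_l(1+i\tau d)^{-1} - \cdots = 0$ where $\lambda$ ranges over the eigenvalues of $G$ and $d$ over those of $D_l$; I would then impose $|\xi_l|\le 1$.

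The main obstacle I expect is the bookkeeping in the stability condition $|\xi_l|\le 1$: for a quadratic $\xi^2 - b\xi - c = 0$ with the particular complex coefficients generated here, the root-modulus condition is not simply ``$|b|\le$ something'' as in the SIFD1 case, and one must verify that the implicit Dirac factor $(1+\tau^2 d^2)$ in the denominator does indeed decouple cleanly so that only $\tau(|V^0|+|A_1^0|)$ survives as the genuine constraint. Concretely, the eigenvalues of $G = V^0 I_2 - A_1^0\sigma_1$ are $V^0 \pm A_1^0$, whose magnitudes are bounded by $|V^0|+|A_1^0|$, and I would show that the worst-case mode yields exactly the requirement $\tau(|V^0|+|A_1^0|)\le 1$, reproducing \eqref{Sta3}. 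The remaining verification—that the implicit part contributes no further restriction and that the condition is uniform in $h$ and $\varepsilon$—follows by the same argument that made the implicit block in SIFD1 automatically stable, so the $\varepsilon$- and $h$-independence of \eqref{Sta3} emerges naturally from the implicit discretization of the $1/\varepsilon$ terms.
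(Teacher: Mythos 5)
Your overall strategy---von Neumann analysis in which the restriction comes only from the explicitly treated potential matrix $G=V^{0}I_{2}-A_{1}^{0}\sigma_{1}$, whose eigenvalues $V^{0}\mp A_{1}^{0}$ give the worst case $|V^{0}|+|A_{1}^{0}|$---matches the paper's, and your final condition is the correct one. However, your designated ``key step'' would fail as stated: you cannot ``pass to eigenvalues'' of $D_{l}$ and $G$ simultaneously, because these two Hermitian matrices do not commute. Indeed, $[D_{l},G]=-\tfrac{A_{1}^{0}}{\varepsilon}[\sigma_{3},\sigma_{1}]=-\tfrac{2iA_{1}^{0}}{\varepsilon}\sigma_{2}\neq 0$ whenever $A_{1}^{0}\neq 0$, which is exactly the case of interest since $|A_{1}^{0}|$ appears in \eqref{Sta3}. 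With no common eigenbasis, the $2\times 2$ recursion does not decouple into scalar quadratics of the form $\xi_{l}^{2}-2i\tau\lambda\,\xi_{l}(1+i\tau d)^{-1}-\cdots=0$ indexed by pairs $(\lambda,d)$; even the scalar-amplification ansatz acting on a fixed vector is not general here. The true characteristic equation, obtained from
$\det\left[\tfrac{i(\xi^{2}-1)}{2\tau}I_{2}-\tfrac{\xi^{2}+1}{2}D_{l}-\xi G\right]=0$
via $\det(\alpha I_{2}+\beta\sigma_{1}+\gamma\sigma_{3})=\alpha^{2}-\beta^{2}-\gamma^{2}$, is a quartic in $\xi_{l}$ with coupled coefficients that does not factor the way you claim, so the route through the full matrix recursion cannot be completed as proposed.

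The paper sidesteps this entirely, and more crudely: exactly as in Lemma \ref{lemsifd1} for SIFD1, it declares the implicitly treated block $\tfrac{1}{\varepsilon}\bigl(-i\sigma_{1}\delta_{x}+\sigma_{3}\bigr)\tfrac{\varPhi_{j}^{n+1}+\varPhi_{j}^{n-1}}{2}$ automatically stable and applies the Fourier ansatz \eqref{prlefd1} \emph{only} to the reduced explicit scheme $i\delta_{t}\varPhi_{j}^{n}=\bigl(V^{0}I_{2}-A_{1}^{0}\sigma_{1}\bigr)\varPhi_{j}^{n}$. That leaves a single matrix to diagonalize, yielding the clean leap-frog quadratic $\xi_{l}^{2}-2i\tau(-V^{0}\pm A_{1}^{0})\xi_{l}-1=0$ and hence $|\xi_{l}|\leq 1$ if and only if $\tau\,|-V^{0}\pm A_{1}^{0}|\leq 1$, which gives \eqref{Sta3}. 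Your closing paragraph in fact falls back on this same decoupling heuristic (``follows by the same argument that made the implicit block in SIFD1 automatically stable''), so the repair is simply to do what the paper does: drop the implicit block \emph{before} introducing the ansatz rather than carrying both blocks through the amplification analysis and attempting a joint diagonalization that non-commutativity forbids.
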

\begin{proof}
Reference to the proof of  Lemma \ref{lemsifd1}, here we only need to focus on the explicit part $i\delta_t\varPhi_j^n=\big(V_j^0I_2-A_{i,j}^0\sigma_1\big)\varPhi_j^n.$
Substituting the formula \eqref{prlefd1} into the explicit part of \eqref{fuz3}, we get
\begin{equation}\label{prlefdX3}
\xi_{l}^{2}-2 i \tau (-V^0\pm A_1^0) \xi_{l}-1=0, \quad l=-\frac{M}{2}, \ldots, \frac{M}{2}-1,
\end{equation}
which indicates the stability condition is $0<\tau\le\frac{1}{|V^0|+|A_1^0|}.$
\end{proof}

\begin{lemma}\label{lem}
The LFFD method \eqref{fuz1} is stable under its corresponding stability condition
\begin{equation}\label{Sta1}
0<\tau \leq \frac{\varepsilon h}{\left|V^{0}\right| \varepsilon h+\sqrt{h^2+\left(\left|A_{1}^{0}\right|\varepsilon h+1\right)^{2}}}, \qquad h>0, ~ 0<\varepsilon \leq 1.
\end{equation}
\end{lemma}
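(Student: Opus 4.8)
The plan is to mirror the von~Neumann stability analysis already carried out for the SIFD1 and SIFD2 methods in Lemmas~\ref{lemsifd1} and~\ref{lemsifd2}, since the LFFD scheme \eqref{fuz1} is fully explicit and therefore every term contributes to the amplification equation. First I would insert the Fourier ansatz \eqref{prlefd1} into the LFFD scheme \eqref{fuz1} with the constant potentials $V(t,x)\equiv V^0$ and $A_1(t,x)\equiv A_1^0$. Using $\delta_x\varPhi_j^n=\frac{\sin(\mu_l h)}{h}\,\widetilde{(\varPhi^0)}_l e^{i\mu_l(x_j-a)}$ (so that the central difference acts as multiplication by $i\sin(\mu_l h)/h$) and $\delta_t\varPhi_j^n=\frac{\xi_l^{n+1}-\xi_l^{n-1}}{2\tau}$, the scheme reduces, for each mode $l$, to a $2\times 2$ matrix recurrence of the form
\begin{equation*}
\frac{i}{2\tau}\left(\xi_l^{n+1}-\xi_l^{n-1}\right)I_2
=\left(\frac{\sin(\mu_l h)}{\varepsilon h}\sigma_1+\frac{1}{\varepsilon}\sigma_3
+V^0 I_2-A_1^0\sigma_1\right)\xi_l^n.
\end{equation*}

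Next I would diagonalize the Hermitian coefficient matrix $Q_l:=\bigl(\tfrac{\sin(\mu_l h)}{\varepsilon h}-A_1^0\bigr)\sigma_1+\tfrac{1}{\varepsilon}\sigma_3+V^0 I_2$ that multiplies $\xi_l^n$. Since $\sigma_1$ and $\sigma_3$ are trace-free with $\sigma_1^2=\sigma_3^2=I_2$ and they anticommute, the matrix $\bigl(\tfrac{\sin(\mu_l h)}{\varepsilon h}-A_1^0\bigr)\sigma_1+\tfrac{1}{\varepsilon}\sigma_3$ has eigenvalues $\pm\sqrt{\bigl(\tfrac{\sin(\mu_l h)}{\varepsilon h}-A_1^0\bigr)^2+\tfrac{1}{\varepsilon^2}}$, so the eigenvalues of $Q_l$ are $\lambda_l^{\pm}=V^0\pm\sqrt{\bigl(\tfrac{\sin(\mu_l h)}{\varepsilon h}-A_1^0\bigr)^2+\tfrac{1}{\varepsilon^2}}$. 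On each eigenspace the scheme becomes the scalar three-term recurrence $\xi_l^{n+1}-\xi_l^{n-1}=-2i\tau\lambda_l^{\pm}\xi_l^n$, whose characteristic equation is $\xi_l^2+2i\tau\lambda_l^{\pm}\xi_l-1=0$, exactly the leap-frog form seen in \eqref{prlefdX1}. The product of the two roots is $-1$, so $|\xi_l|\le 1$ for both roots holds \emph{iff} the roots lie on the unit circle, which is equivalent to $\tau|\lambda_l^{\pm}|\le 1$ for all modes.

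The remaining step is to turn $\max_{l,\pm}\tau|\lambda_l^{\pm}|\le 1$ into the explicit condition \eqref{Sta1}. Bounding $|\lambda_l^{\pm}|\le |V^0|+\sqrt{\bigl(\tfrac{\sin(\mu_l h)}{\varepsilon h}-A_1^0\bigr)^2+\tfrac{1}{\varepsilon^2}}$ and using $|\sin(\mu_l h)|\le 1$ together with $\bigl|\tfrac{\sin(\mu_l h)}{\varepsilon h}-A_1^0\bigr|\le\tfrac{1}{\varepsilon h}+|A_1^0|=\tfrac{1+|A_1^0|\varepsilon h}{\varepsilon h}$, one gets
\begin{equation*}
|\lambda_l^{\pm}|\le |V^0|+\frac{1}{\varepsilon h}\sqrt{\bigl(1+|A_1^0|\varepsilon h\bigr)^2+h^2},
\end{equation*}
and requiring $\tau$ times this worst-case bound to be at most $1$ rearranges into precisely \eqref{Sta1}. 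I expect the main obstacle to be this last algebraic reduction: one must verify that the bound on $\bigl|\tfrac{\sin(\mu_l h)}{\varepsilon h}-A_1^0\bigr|$ is actually attained (or sharp up to the claimed form) over the admissible frequencies $\mu_l$, so that the stated condition is both sufficient and matches the sharp CFL threshold, rather than merely sufficient. Care is also needed because the $\sigma_1$ coefficient couples the discrete dispersion term with the magnetic potential $A_1^0$, so the eigenvalue combines them inside a single square root rather than additively, and one should confirm the triangle-inequality step does not lose the intended $\varepsilon$-dependence.
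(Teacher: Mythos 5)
Your proposal is correct and follows essentially the same route as the paper: a von~Neumann analysis with the Fourier ansatz \eqref{prlefd1}, reduction to the amplification-factor quadratic $\xi_l^2 - 2i\tau\theta_l\xi_l - 1 = 0$ with $\theta_l = \pm\bigl(V^0 \mp \frac{1}{\varepsilon h}\sqrt{h^2+(\sin(\mu_l h)-A_1^0\varepsilon h)^2}\,\bigr)$, the equivalence $|\xi_l|\le 1 \Leftrightarrow \tau|\theta_l|\le 1$, and the worst-case bound over all modes yielding \eqref{Sta1}. The only cosmetic difference is that you extract the eigenvalues by diagonalizing the Hermitian symbol via the Pauli anticommutation relations while the paper computes the $2\times 2$ determinant \eqref{prlefd2}, and you write out the final triangle-inequality step that the paper compresses into ``directly gives the condition''; your closing worry about sharpness is not needed, since the lemma only asserts sufficiency of \eqref{Sta1}.
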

\begin{proof}
(\romannumeral1) Plugging \eqref{prlefd1} into the LFFD method, and considering the orthogonality for the Fourier series, we have
\begin{equation}\label{prlefd2}
\left|\left(\xi_{l}^{2}-1\right) I_{2}+2 i \tau \xi_{l}\left(\frac{\sigma_{3}}{\varepsilon} +V^{0} I_{2}-A_{1}^{0} \sigma_{1}+\frac{\sin \left(\mu_{l} h\right)}{\varepsilon h} \sigma_{1}\right)\right|=0,\quad l=-\frac{M}{2},\cdot\cdot\cdot,\frac{M}{2}-1,
\end{equation}
Substituting \eqref{pau} into \eqref{prlefd2}, we obtain the amplification factor $\xi_l$ satisfies
\begin{equation}\label{prlefd3}
\xi_{l}^{2}-2 i \tau \theta_{l} \xi_{l}-1=0, \quad l=-\frac{M}{2}, \ldots, \frac{M}{2}-1,
\end{equation}
in which
\begin{equation}\label{prlefd4}
\theta_{l}=-V^{0} \pm \frac{1}{\varepsilon h} \sqrt{h^{2}+\left(-A_{1}^{0} \varepsilon h+\sin \left(\mu_{l} h\right)\right)^{2}}, \quad l=-\frac{M}{2}, \ldots, \frac{M}{2}-1,
\end{equation}
and the stability condition for the LFFD method \eqref{fuz1} equivalents to
\begin{equation}\label{prlefd5}
\left|\xi_{l}\right| \leq 1 \Longleftrightarrow\left|\tau\theta_{l}\right| \leq 1, \quad l=-\frac{M}{2}, \ldots, \frac{M}{2}-1,
\end{equation}
hence it directly gives the condition \eqref{Sta1}.
\end{proof}
\subsection{Error estimate}
According to the assumption (B) in \eqref{lefs2}, the stability condition of the SIFD2 method becomes
\begin{equation}\label{lefs3}
0<\tau \leq \frac{1}{ V_{\max }+A_{1, \max }}, \qquad h>0, ~ 0<\varepsilon \leq 1,
\end{equation}
the stability condition of the LFFD method becomes
\begin{equation}\label{lefs1}
0<\tau \leq \frac{\varepsilon h}{\varepsilon h V_{\max }+\sqrt{h^{2}+\left(\varepsilon h A_{1, \max }+1\right)^{2}}}, \qquad h>0, ~ 0<\varepsilon \leq 1,
\end{equation}
next we could establish the error bounds under these stability conditions.
\begin{theorem}\label{3th3}
Under the assumptions in (A) and (B), there exist the constants $h_0>0,\,\tau_0>0$ independent of $\varepsilon$ and sufficiently small, such that for any $0<\varepsilon\le1,$ if $\,0<h \le h_0,\,0<\tau\le \tau_0$ and under the above stability condition \eqref{lefs3}, for the SIFD2 method \eqref{fuz3} with \eqref{zbcn} and \eqref{zf1si1}, we obtain the error estimate on the wave function as below
\begin{equation}\label{tcsifd2}
\left\|\mathrm{\textbf{e}}^{n}\right\|_{l^{2}} \lesssim \frac{h^{2}}{\varepsilon}+\frac{\tau^{2}}{\varepsilon^{3}}, \qquad 0 \leq n \leq \frac{T}{\tau}.
\end{equation}
\end{theorem}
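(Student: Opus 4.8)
The plan is to follow closely the energy argument used for the SIFD1 method in Theorem \ref{2th2}, exploiting the fact that in \eqref{fuz3} the roles of the explicit and implicit parts are interchanged: the free Dirac operator $\frac{1}{\varepsilon}(-i\sigma_1\delta_x+\sigma_3)$ is now treated implicitly on the symmetric average $\frac{\varPhi_j^{n+1}+\varPhi_j^{n-1}}{2}$, while the bounded potential term $V_j^nI_2-A_{1,j}^n\sigma_1$ is treated explicitly at $t_n$. First I would define the local truncation error $\eta_j^n$ by substituting the exact solution $\varPhi(t_n,x_j)$ into \eqref{fuz3}, with the differential operator acting on $\frac{\varPhi(t_{n+1},x_j)+\varPhi(t_{n-1},x_j)}{2}$ and the potential evaluated at $t_n$, together with the first-step error $\eta_j^0$ coming from \eqref{zf1si1}, exactly in the spirit of \eqref{cfth1}--\eqref{cfth2}.

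Next I would Taylor expand and invoke assumption (A). The leap-frog operator $\delta_t$ produces $\frac{\tau^2}{6}\partial_{ttt}\varPhi$, which is $O(\tau^2/\varepsilon^3)$; the central difference $\delta_x$ produces $\frac{h^2}{6\varepsilon}\sigma_1\partial_{xxx}\varPhi$, which is $O(h^2/\varepsilon)$; and replacing $\varPhi(t_n)$ by the average $\frac{\varPhi(t_{n+1})+\varPhi(t_{n-1})}{2}$ inside the $O(1/\varepsilon)$ Dirac operator costs $\frac{\tau^2}{2}\cdot\frac{1}{\varepsilon}(-i\sigma_1\partial_x+\sigma_3)\partial_{tt}\varPhi$, which by the bounds $\|\partial_{tt}\varPhi\|\lesssim\varepsilon^{-2}$ and $\|\partial_{ttx}\varPhi\|\lesssim\varepsilon^{-2}$ from (A) is again $O(\tau^2/\varepsilon^3)$. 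Hence $\|\eta^n\|_{l^2}\lesssim\|\eta^n\|_{l^\infty}\lesssim\frac{h^2}{\varepsilon}+\frac{\tau^2}{\varepsilon^3}$ for $n\ge1$, while $\eta^0$ carries one fewer power of $\tau$ and yields $\|\mathbf{e}^1\|_{l^2}=\tau\|\eta^0\|_{l^2}\lesssim\frac{h^2}{\varepsilon}+\frac{\tau^2}{\varepsilon^3}$, just as in \eqref{cf42}.

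The core step is the energy estimate for the error equation $i\delta_t\mathbf{e}_j^n=\frac{1}{\varepsilon}(-i\sigma_1\delta_x+\sigma_3)\frac{\mathbf{e}_j^{n+1}+\mathbf{e}_j^{n-1}}{2}+(V_j^nI_2-A_{1,j}^n\sigma_1)\mathbf{e}_j^n+\eta_j^n$, obtained by subtracting \eqref{fuz3}, with the homogeneous data \eqref{cf5}. I would multiply by $2h\tau(\mathbf{e}_j^{n+1}+\mathbf{e}_j^{n-1})^*$, sum over $j=0,\dots,M-1$, and take the imaginary part; the left-hand side telescopes to $\|\mathbf{e}^{n+1}\|_{l^2}^2-\|\mathbf{e}^{n-1}\|_{l^2}^2$. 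The decisive point is that $\frac{1}{\varepsilon}(-i\sigma_1\delta_x+\sigma_3)$ is self-adjoint in the discrete $l^2$ inner product: summation by parts under the periodic boundary conditions makes $\delta_x$ skew-adjoint while $\sigma_1,\sigma_3$ are Hermitian by \eqref{pau}, so its quadratic form on the average $\frac{\mathbf{e}^{n+1}+\mathbf{e}^{n-1}}{2}$ is real and drops out of the imaginary part. This is precisely what makes the auxiliary cross-term energy $\mathcal{E}^{n+1}$ of \eqref{cf6}, and the restriction $\tau\le\varepsilon h$ of \eqref{Sta2}, unnecessary here. The remaining explicit potential term is controlled directly by $V_{\max}+A_{1,\max}$ from (B), the stability condition \eqref{lefs3} guaranteeing that the scheme stays bounded, and together with the $\eta^n$ term it is absorbed by the Cauchy and Young inequalities to give $\|\mathbf{e}^{n+1}\|_{l^2}^2-\|\mathbf{e}^{n-1}\|_{l^2}^2\lesssim\tau(\|\mathbf{e}^{n+1}\|_{l^2}^2+\|\mathbf{e}^n\|_{l^2}^2+\|\mathbf{e}^{n-1}\|_{l^2}^2)+\tau\|\eta^n\|_{l^2}^2$.

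Finally I would sum this recursion over $n=1,\dots,m-1$, so that the left-hand side telescopes to $\|\mathbf{e}^m\|_{l^2}^2+\|\mathbf{e}^{m-1}\|_{l^2}^2$, insert $\|\mathbf{e}^0\|_{l^2}=0$ together with the first-step bound, take $\tau_0$ sufficiently small, and apply the discrete Gronwall inequality to conclude $\|\mathbf{e}^m\|_{l^2}^2\lesssim(\frac{h^2}{\varepsilon}+\frac{\tau^2}{\varepsilon^3})^2$, which is \eqref{tcsifd2}. I expect the main obstacle to be the truncation-error analysis rather than the energy identity: one must track the $\varepsilon$-dependence of the mixed space--time derivatives through assumption (A) to verify that both the leap-frog time discretization and the averaging of the $O(1/\varepsilon)$ Dirac operator contribute exactly $O(\tau^2/\varepsilon^3)$ and no larger, since any loss of a power of $\varepsilon$ at this stage would propagate through Gronwall and destroy the stated $\varepsilon$-scalability.
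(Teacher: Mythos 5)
Your proof is correct and is essentially the paper's own argument: the paper gives no separate proof of Theorem~\ref{3th3}, stating only that it is ``similarly obtained'' by referring to the proofs of Theorems~\ref{4th4} and~\ref{2th2}, and your write-up is exactly that adaptation carried out in full, with the right truncation-error bookkeeping ($\partial_{ttt}\varPhi$, $\partial_{xxx}\varPhi$, and the $\frac{\tau^2}{2\varepsilon}(-i\sigma_1\partial_x+\sigma_3)\partial_{tt}\varPhi$ averaging term all landing at $\frac{h^2}{\varepsilon}+\frac{\tau^2}{\varepsilon^3}$ under assumption (A)), followed by the same multiply-by-$2h\tau(\mathbf{e}_j^{n+1}+\mathbf{e}_j^{n-1})^*$, sum, telescope, and Gronwall steps. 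Your added observation that the implicit treatment of $\frac{1}{\varepsilon}\left(-i\sigma_1\delta_x+\sigma_3\right)$ makes its quadratic form real (by discrete summation by parts and Hermiticity of $\sigma_1,\sigma_3$), so that the cross-term energy $\mathcal{E}^{n+1}$ of \eqref{cf6} and the CFL-type restriction $\tau\le\varepsilon h$ are not needed and only the mild condition \eqref{lefs3} enters, is a correct and accurate account of how the SIFD1 argument simplifies for SIFD2.
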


\begin{theorem}\label{1th1}
Under the assumptions in (A) and (B), there exist the constants $h_0>0,\,\tau_0>0$ independent of $\varepsilon$ and sufficiently small, such that for any $0<\varepsilon\le1,$ if $\,0<h \le h_0,\,0<\tau\le \tau_0$ and under the above stability condition \eqref{lefs1}, for the LFFD method \eqref{fuz1} with \eqref{zbcn} and \eqref{zf1si1}, we obtain the error estimate on the wave function as below
\begin{equation}\label{tc01}
\left\|\mathrm{\textbf{e}}^{n}\right\|_{l^{2}} \lesssim \frac{h^{2}}{\varepsilon}+\frac{\tau^{2}}{\varepsilon^{3}}, \qquad 0 \leq n \leq \frac{T}{\tau}.
\end{equation}
\end{theorem}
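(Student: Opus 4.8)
The plan is to reuse the three-level energy framework developed for SIFD1 in Theorem \ref{2th2}, since the LFFD scheme \eqref{fuz1} is also a leap-frog (three-level) discretization; the one structural novelty is that here \emph{every} term, including the mass term $\sigma_3/\varepsilon$, is treated explicitly at level $n$. First I would define the local truncation error $\eta^n=(\eta_0^n,\ldots,\eta_M^n)^T\in X_M$ for $n\ge1$ by
\begin{equation*}
\eta_j^n:=i\Big(\delta_t+\tfrac{1}{\varepsilon}\sigma_1\delta_x\Big)\varPhi(t_n,x_j)-\Big(\tfrac{\sigma_3}{\varepsilon}+V_j^nI_2-A_{1,j}^n\sigma_1\Big)\varPhi(t_n,x_j),
\end{equation*}
together with a separate first-step error $\eta^0$ produced by the starting step \eqref{zf1si1}. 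Taylor expanding $\delta_t$ and $\delta_x$ leaves the remainders $\tfrac{\tau^2}{6}\partial_{ttt}\varPhi$ and $\tfrac{h^2}{6\varepsilon}\sigma_1\partial_{xxx}\varPhi$, and invoking assumptions (A)--(B) (so $\|\partial_{ttt}\varPhi\|\lesssim\varepsilon^{-3}$ and $\|\partial_{xxx}\varPhi\|\lesssim1$) gives $\|\eta^n\|_{l^2}\lesssim\|\eta^n\|_{l^\infty}\lesssim\tfrac{h^2}{\varepsilon}+\tfrac{\tau^2}{\varepsilon^3}$ and, exactly as in \eqref{cf42}, the first-step bound $\|\mathbf{e}^1\|_{l^2}\lesssim\tfrac{h^2}{\varepsilon}+\tfrac{\tau^2}{\varepsilon^3}$. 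Subtracting \eqref{fuz1} then yields the error equation
\begin{equation*}
i\delta_t\mathbf{e}_j^n=-\tfrac{i}{\varepsilon}\sigma_1\delta_x\mathbf{e}_j^n+\Big(\tfrac{\sigma_3}{\varepsilon}+V_j^nI_2-A_{1,j}^n\sigma_1\Big)\mathbf{e}_j^n+\eta_j^n,
\end{equation*}
with the homogeneous data \eqref{cf5}.

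The key structural observation is that the leading discrete operator $\mathcal{D}:=-\tfrac{i}{\varepsilon}\sigma_1\delta_x+\tfrac{\sigma_3}{\varepsilon}$ is time-independent and self-adjoint on $X_M$ under the discrete $l^2$ inner product (central differencing is skew-symmetric, so $-i\sigma_1\delta_x$ is Hermitian, and $\sigma_3$ is Hermitian). Mimicking \eqref{cf6}, I would introduce the leap-frog energy
\begin{equation*}
\mathcal{E}^{n+1}=\|\mathbf{e}^{n+1}\|_{l^2}^2+\|\mathbf{e}^n\|_{l^2}^2+2\tau\,\mathrm{Im}\Big[h\sum_{j=0}^{M-1}(\mathbf{e}_j^{n+1})^*\mathcal{D}\mathbf{e}_j^n\Big],
\end{equation*}
which now carries the entire $O(1/\varepsilon)$ operator rather than merely the derivative term as in SIFD1, because in LFFD the mass term $\sigma_3/\varepsilon$ is explicit as well. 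Multiplying the error equation by $2h\tau(\mathbf{e}_j^{n+1}+\mathbf{e}_j^{n-1})^*$, taking the imaginary part and summing over $j$, the self-adjointness and time-independence of $\mathcal{D}$ make its contribution telescope exactly into $\mathcal{E}^{n+1}-\mathcal{E}^n$, while the bounded zeroth-order potential $V_j^nI_2-A_{1,j}^n\sigma_1$ and the truncation error $\eta^n$ are left on the right-hand side and estimated by Cauchy's inequality together with assumption (B). This produces the recursion $\mathcal{E}^{n+1}-\mathcal{E}^n\lesssim\tau(\mathcal{E}^{n+1}+\mathcal{E}^n)+\tau\big(\tfrac{h^2}{\varepsilon}+\tfrac{\tau^2}{\varepsilon^3}\big)^2$, exactly as in \eqref{cf8}.

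The main obstacle is establishing the norm equivalence $\tfrac12(\|\mathbf{e}^{n+1}\|_{l^2}^2+\|\mathbf{e}^n\|_{l^2}^2)\le\mathcal{E}^{n+1}\le\tfrac32(\|\mathbf{e}^{n+1}\|_{l^2}^2+\|\mathbf{e}^n\|_{l^2}^2)$ analogous to \eqref{cf7}, since $\mathcal{E}^{n+1}$ is a useful surrogate only if it is comparable to the genuine norm with constants independent of $\varepsilon$. Bounding the cross term via Cauchy's inequality requires controlling $\tau$ times the operator norm of $\mathcal{D}$; passing to Fourier space as in \eqref{prlefd4}, the symbol of $\mathcal{D}$ has eigenvalues $\pm\tfrac{1}{\varepsilon h}\sqrt{h^2+\sin^2(\mu_l h)}$, so $\|\mathcal{D}\|\le\tfrac{1}{\varepsilon h}\sqrt{h^2+1}$. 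Dropping the nonnegative $\varepsilon hV_{\max}$ term and using $(\varepsilon hA_{1,\max}+1)^2\ge1$ in its denominator, the stability condition \eqref{lefs1} implies $\tau\le\varepsilon h/\sqrt{h^2+1}$, whence $\tau\|\mathcal{D}\|\le1$; tightening by the harmless factor $\tfrac12$ (the $\tau_1=\tfrac12$ device used for SIFD1) gives $\tau\|\mathcal{D}\|\le\tfrac12$ and hence the equivalence with $\varepsilon$-independent constants. This is the single place where the full condition \eqref{lefs1}, and not merely a derivative CFL restriction, is indispensable, precisely because the explicit mass term $\sigma_3/\varepsilon$ enlarges the relevant symbol from $\tfrac{\sin(\mu_l h)}{\varepsilon h}$ to $\tfrac{1}{\varepsilon h}\sqrt{h^2+\sin^2(\mu_l h)}$. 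With the equivalence secured, summing the recursion for $n=1,\ldots,m-1$, inserting $\mathcal{E}^1\lesssim(\tfrac{h^2}{\varepsilon}+\tfrac{\tau^2}{\varepsilon^3})^2$, and applying the discrete Gronwall inequality with $\tau_0$ sufficiently small yields $\mathcal{E}^m\lesssim(\tfrac{h^2}{\varepsilon}+\tfrac{\tau^2}{\varepsilon^3})^2$, which returns \eqref{tc01} through the equivalence.
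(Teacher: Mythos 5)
Your proof is correct and takes essentially the route the paper intends: the paper gives no separate proof of Theorem \ref{1th1}, deferring to the SIFD1 argument of Theorem \ref{2th2}, and your adaptation---carrying the full explicit operator $\mathcal{D}=-\tfrac{i}{\varepsilon}\sigma_1\delta_x+\tfrac{\sigma_3}{\varepsilon}$ in the three-level energy and enforcing $\tau\|\mathcal{D}\|\le\tfrac12$ via the Fourier symbol eigenvalues $\pm\tfrac{1}{\varepsilon h}\sqrt{h^2+\sin^2(\mu_l h)}$ under (a constant-factor tightening of) \eqref{lefs1}---is exactly the modification required, including the correct truncation-error bounds from assumption (A) and the first-step estimate inherited from \eqref{cf42}. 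One cosmetic fix: for the telescoping identity $\mathcal{E}^{n+1}-\mathcal{E}^{n}=\cdots$ to hold, the cross term must carry the opposite sign, i.e.
\begin{equation*}
\mathcal{E}^{n+1}=\|\mathbf{e}^{n+1}\|_{l^2}^{2}+\|\mathbf{e}^{n}\|_{l^2}^{2}-2\tau\,\mathrm{Im}\Big[h\sum_{j=0}^{M-1}\big(\mathbf{e}_{j}^{n+1}\big)^{*}\mathcal{D}\,\mathbf{e}_{j}^{n}\Big],
\end{equation*}
which restricted to the derivative part reproduces exactly the paper's \eqref{cf6}, and this changes nothing downstream since the equivalence \eqref{cf7} only uses the absolute value of that term.
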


\begin{table}[thp]
 \caption{{\normalsize Spatial and temporal error analysis of the wave function $e_\varPhi^{h,\tau}(t=2)$ for the SIFD2 method}}
 \vspace{0.8mm}\label{sifd2ht}
 \centering
 \setlength{\tabcolsep}{5.5mm}
 {\begin{tabular}{cccccc}
 \hline
$e_\varPhi^h(t=2)$&$h_0=1/16$ &$h_0/2$  &$h_0/2^2$  &$h_0/2^3$ &$h_0/2^4$\\
 \hline\Xhline{0.8pt}
$\varepsilon_0=1$&\textbf{3.35E-1}&8.48E-2&2.12E-2&5.30E-3&1.33E-3\\
Order&$-$&1.98&2.00&2.00&2.00\\
$\varepsilon_0/4$&1.20&\textbf{3.22E-1}&8.11E-2&2.03E-2&5.07E-3\\
Order&$-$&\textbf{1.90}&1.99&2.00&2.00\\
$\varepsilon_0/4^{2}$&1.43&1.21&\textbf{3.22E-1}&8.09E-2&2.02E-2 \\
Order&$-$&0.24&\textbf{1.91}&1.99&2.00\\
$\varepsilon_0/4^{3}$&2.96&1.41&1.21&\textbf{3.23E-1}&8.09E-2\\
Order&$-$&1.07&0.22&\textbf{1.91}&1.99\\
\hline
$e_\varPhi^\tau(t=2)$&$\tau_0=1/40$ &$\tau_0/4$  &$\tau_0/4^2$  &$\tau_0/4^3$&$\tau_0/4^4$\\
\hline\Xhline{0.8pt}
$\varepsilon_0=1$&\textbf{1.21E-1}&7.68E-3&4.81E-4&3.03E-5&2.19E-6\\
Order&$-$&1.99&2.00&1.99&1.90\\
$\varepsilon_0/4^{2/3}$&1.48&\textbf{1.12E-1}&7.01E-3&4.39E-4&2.82E-5\\
Order&$-$&\textbf{1.87}&2.00&2.00&1.98\\
$\varepsilon_0/4^{4/3}$&3.15&1.52&\textbf{1.10E-1}&6.88E-3&4.32E-4\\
Order&$-$&0.53&\textbf{1.90}&2.00&2.00\\
$\varepsilon_0/4^{2}$&2.36&3.43&1.54&\textbf{1.09E-1}&6.85E-3\\
Order&$-$&-0.27&0.58&\textbf{1.91}&2.00\\
\hline
\end{tabular}}
\end{table}

By referring to the proof of Theorems \ref{4th4}, \ref{2th2}, the proof of Theorems \ref{3th3} and \ref{1th1} are similarly obtained. Actually, in the massless and nonrelativistic regime, when given an accuracy bound $\delta>0,$ the $\varepsilon$-resolution of the SIFD2 and LFFD methods is:
\begin{equation}\label{g1}
h=O(\sqrt{\delta \varepsilon})=O(\sqrt{\varepsilon}), \quad
\tau=O\left(\sqrt{\delta\varepsilon^{3}}\right)=O\left(\sqrt{\varepsilon^{3}}\right), \quad  0<\varepsilon \ll 1.
\end{equation}
Based on the Theorems \ref{4th4}-\ref{1th1}, the four finite difference methods analyzed here share the same spatial and temporal scalability for the Dirac equation in the massless and nonrelativistic regime.

\subsection{Numerical results}
In the following numerical simulation, the electromagnetic potential, initial condition, error functions and mesh sizes are same with \eqref{nte}-\eqref{abe} in Subsection \ref{cnfdnr}. Due to the stability conditions and accuracy requirement for the LFFD method, similar to the SIFD1 method, we take \eqref{stacop} in Table \ref{lffdht}. Tables \ref{sifd2ht}, \ref{lffdht} present the spatial errors $e_\varPhi^h(t=2)$ and the temporal errors $e_\varPhi^\tau(t=2)$ of the wave function for the SIFD2 \eqref{fuz3} and LFFD \eqref{fuz1} methods, respectively. Besides, Tables \ref{sifd2to}, \ref{sifd2cu} display the errors in spatial and temporal of the total density $e_\rho^{h/\tau}(t=2)$ and current density $e_\textbf{J}^{h/\tau}(t=2)$ by using the SIFD2 method.

\begin{table}[t!]
 \caption{{\normalsize Spatial and temporal error analysis of the wave function $e_\varPhi^{h,\tau}(t=2)$ for the LFFD method}}
 \vspace{0.8mm} \label{lffdht}
 \centering
 \setlength{\tabcolsep}{5.2mm}
 {\begin{tabular}{cccccc}
 \hline
$e_\varPhi^h(t=2)$&$h_0=1/16$ &$h_0/2$  &$h_0/2^2$  &$h_0/2^3$ &$h_0/2^4$ \\
 \hline\Xhline{0.8pt}
$\varepsilon_0=1$&\textbf{3.35E-1}& 8.48E-2& 2.12E-2& 5.30E-3&1.33E-3\\
Order&$-$&1.98&2.00&2.00&2.00\\
$\varepsilon_0/4$&1.20&\textbf{3.22E-1}&8.11E-2&2.03E-2&5.07E-3\\
Order&$-$&\textbf{1.90}&1.99&2.00&2.00\\
$\varepsilon_0/4^{2}$&1.43&1.21&\textbf{3.22E-1}&8.08E-2&2.02E-2\\
Order&$-$&0.24&\textbf{1.91}&1.99&2.00\\
$\varepsilon_0/4^{3}$&2.96&1.41&1.21&\textbf{3.20E-1}&7.86E-2\\
Order&$-$&1.07&0.22&\textbf{1.92}&2.03\\
\hline
\multirow{2}*{$e_\Phi^\tau(t=2)$} &$\tau_0=1/40$ &$\tau_0/4$  &$\tau_0/4^2$  &$\tau_0/4^3$&$\tau_0/4^4$\\
&$h_0=1/16$ &$h_0/4\delta_1(\varepsilon)$ &$h_0/4^2\delta_2(\varepsilon)$ &$h_0/4^3\delta_3(\varepsilon)$&$h_0/4^4\delta_4(\varepsilon)$\\
\hline\Xhline{0.8pt}
$\varepsilon_0=1$&\textbf{2.80E-1}&1.73E-2&1.08E-3&6.75E-5&4.22E-6\\
Order&$-$&$2.01$&2.00&2.00&2.00\\
$\varepsilon_0/4^{2/3}$&Unstable&\textbf{1.48E-1}&9.22E-3&5.76E-4&3.60E-5\\
Order&$-$&$-$&2.00&2.00&2.00\\
$\varepsilon_0/4^{4/3}$&Unstable&1.13&\textbf{7.41E-2}&4.63E-3&2.89E-4\\
Order&$-$&$-$&\textbf{1.96}&2.00&2.00\\
$\varepsilon_0/4^{2}$&Unstable&1.55&4.24E-1&\textbf{2.64E-2}&1.65E-3\\
Order&$-$&$-$&0.93&\textbf{2.00}&2.00\\
\hline
\end{tabular}}
\end{table}

\begin{table}[t!]
 \caption{{\normalsize Spatial and temporal error analysis of the total density $e_\rho^{h,\tau}(t=2)$ for the SIFD2 method}}
 \vspace{0.8mm}\label{sifd2to}
 \centering
 \setlength{\tabcolsep}{5.5mm}
 {\begin{tabular}{ccccccc}
 \hline\Xhline{0.8pt}
$e_\rho^h(t=2)$ &$h_0=1/32$ &$h_0/2$  &$h_0/2^2$  &$h_0/2^3$ &$h_0/2^4$\\
 \hline\Xhline{0.8pt}
$\varepsilon_0=1$&1.28E-1&3.02E-2&7.45E-3&1.86E-3&4.64E-4\\
Order&$-$&2.08&2.02&2.00&2.00\\
$\varepsilon_0/4$&5.83e-1&\textbf{1.52E-1}&3.85E-2&9.65E-3&2.41E-3\\
Order&$-$&\textbf{1.93}&1.99&2.00&2.00\\
$\varepsilon_0/4^{2}$&2.13&6.54E-1&\textbf{1.68E-1}&4.20E-2&1.05E-2\\
Order&$-$&1.70&\textbf{1.97}&2.00&2.00\\
$\varepsilon_0/4^{3}$&3.06&1.34&5.72E-1&\textbf{1.56E-1}&3.98E-2\\
Order&$-$&1.19&1.23&\textbf{1.87}&1.97\\
\hline
$e_\rho^\tau(t=2)$ &$\tau_0=1/40$ &$\tau_0/4$  &$\tau_0/4^2$  &$\tau_0/4^3$&$\tau_0/4^4$\\
\hline\Xhline{0.8pt}
$\varepsilon_0=1$&\textbf{1.91E-1}&1.13E-2&7.02E-4&4.42E-5&3.16E-6\\
Order&$-$&2.04&2.00&1.99&1.90\\
$\varepsilon_0/4^{2/3}$&1.71&\textbf{1.92E-1}&1.26E-2&7.93E-4&5.10E-5\\
Order&$-$&\textbf{1.58}&1.96&2.00&1.98\\
$\varepsilon_0/4^{4/3}$&3.65&3.24&\textbf{2.40E-1}&1.49E-2&9.34E-4\\
Order&$-$&0.09&\textbf{1.88}&2.01&2.00\\
$\varepsilon_0/4^{2}$&7.33&1.76&2.63&\textbf{2.25E-1}&1.42E-2\\
Order&$-$&1.03&-0.29&\textbf{1.77}&2.00\\
\hline
\end{tabular}}
\end{table}

\begin{table}[t!]
 \caption{{\normalsize Spatial and temporal error analysis of the current density $e_\textbf{J}^{h,\tau}(t=2)$ for the SIFD2 method}}
 \vspace{0.8mm}\label{sifd2cu}
 \centering
 \setlength{\tabcolsep}{5.5mm}
 {\begin{tabular}{ccccccc}
 \hline\Xhline{0.8pt}
$e_\textbf{J}^h(t=2)$ &$h_0=1/32$ &$h_0/2$  &$h_0/2^2$  &$h_0/2^3$ &$h_0/2^4$\\
 \hline\Xhline{0.8pt}
$\varepsilon_0=1$&4.67E-1&1.28E-1&3.21E-2&8.02E-3&2.01E-3\\
Order&$-$&1.87&2.00&2.00&2.00\\
$\varepsilon_0/4$&2.05&\textbf{5.77E-1}&1.38E-1&3.40E-2&8.47E-3\\
Order&$-$&\textbf{1.83}&2.06&2.02&2.01\\
$\varepsilon_0/4^{2}$&1.42&1.30&\textbf{3.49E-1}&8.67E-2&2.16E-2\\
Order&$-$&0.13&\textbf{1.89}&2.01&2.01\\
$\varepsilon_0/4^{3}$&3.08&2.38&2.43&\textbf{6.52E-1}&1.56E-1\\
Order&$-$&0.37&-0.03&\textbf{1.90}&2.06\\
\hline
$e_\textbf{J}^\tau(t=2)$ &$\tau_0=1/40$ &$\tau_0/4$  &$\tau_0/4^2$  &$\tau_0/4^3$&$\tau_0/4^4$\\
\hline\Xhline{0.8pt}
$\varepsilon_0=1$&\textbf{1.59E-1}&1.00E-2&6.25E-4&3.95E-5&2.86E-6\\
Order&$-$&2.00&2.00&1.99&1.89\\
$\varepsilon_0/4^{2/3}$&1.87&\textbf{1.23E-1}&7.40E-3&4.62E-4&2.96E-5\\
Order&$-$&\textbf{1.97}&2.03&2.00&1.98\\
$\varepsilon_0/4^{4/3}$&1.70&9.65E-1&\textbf{8.95E-2}&5.74E-3&3.61E-4\\
Order&$-$&0.41&\textbf{1.72}&1.98&2.00\\
$\varepsilon_0/4^{2}$&2.98&6.78E-1&1.38&\textbf{9.10E-2}&5.64E-3\\
Order&$-$&1.07&-0.51&\textbf{1.96}&2.01\\
\hline
\end{tabular}}
\end{table}

From Tables \ref{sifd2ht}-\ref{sifd2cu}, we can directly observe that the LFFD and SIFD1 methods have second order convergence both in time and space, and the $\varepsilon$-resolution $h=O(\varepsilon^{1/2})$ and $\tau=O(\varepsilon^{3/2})$ for the wave function and two densities are consistent with Theorems \ref{3th3}, \ref{1th1} and Corollaries \ref{1co1}, \ref{1co2}. The error estimates are verified by the upper triangles above the diagonal line labelled with bold  type in the top and bottom half of each table. Analogously, the two densities for the CNFD, SIFD1 and LFFD have similar results, which are omitted here for brief.

According to the numerical results presented above, in the massless and nonrelativistic regime, we successfully verify the error estimates for the wave function, total and current densities of the Dirac equation in Theorems \ref{4th4}-\ref{1th1} and Corollaries \ref{1co1}, \ref{1co2} by using the finite difference methods. Moreover, we could obtain our error estimates of Theorems and Corollaries are sharp.

\section{Conclusion}
In this paper, we use four types of finite difference methods numerically to study the Dirac equation in the massless and nonrelativistic regime. The four finite difference methods, including the energy nonconservative/conservative and fully explicit/two semi-implicit/implicit numeric schemes, all have second order convergence in both space and time. In the massless and nonrelativistic regime, the corresponding stability conditions and error estimates of these discrete schemes are rigorously analyzed respectively. The error estimates suggest that the wave function, total density and current density of these four finite difference methods share the same $\varepsilon$-scalability as $h=O(\varepsilon^{1/2})$ and $\tau=O(\varepsilon^{3/2})$. Extensive numerical results are exhibited to verify our error estimates. From the above analysis and numerical examples, it is clear that the computational cost of the CNFD method is the most expensive, while the LFFD method is the most efficient, but it has the most strict stability condition.

\section*{Acknowledgments}
This work was supported by the National Natural Science Foundation of China Grant U1930402 (Y. Ma), and the Ministry of Education of Singapore grant R-146-000-290-114 (J. Yin).
Part of this work was done when the authors visited the Institute for Mathematical Sciences at the National University of Singapore in 2019.



\end{document}